\newtheorem{thm}{Theorem}[section]
\newtheorem{lem}[thm]{Lemma}
\newtheorem{prop}[thm]{Proposition}
\newtheorem{cor}[thm]{Corollary}
\newtheorem*{thma}{Theorem}
\theoremstyle{definition}
\newtheorem{defi}[thm]{Definition}
\newtheorem{ex}[thm]{Example}
\theoremstyle{remark}
\newtheorem{rem}[thm]{Remark}
\numberwithin{equation}{section}
\numberwithin{thm}{section}
\newcommand{\op}{\normalfont{^{op}}}
\newcommand{\Mod}[1]{\operatorname{mod} #1}
\newcommand{\Hom}[3]{\operatorname{Hom}_{#1}\left(#2,#3\right) }
\newcommand{\End}[2]{\operatorname{End}_{#1}\left(#2\right) }
\newcommand{\END}[2]{\operatorname{End}_{#1}(#2) }
\newcommand{\Soc}[1]{\operatorname{Soc}#1}
\newcommand{\Top}[1]{\operatorname{Top}#1}
\newcommand{\Rad}[1]{\operatorname{Rad} #1}
\newcommand{\RAD}[2]{\operatorname{Rad}^{#1} #2}
\newcommand{\Ima}[1]{\operatorname{Im}#1}
\newcommand{\Ker}[1]{\operatorname{Ker}#1}
\newcommand{\Coker}[1]{\operatorname{Coker}#1}
\newcommand{\Tr}[2]{\operatorname{Tr}\left(#1,#2\right)}
\newcommand{\TR}[2]{\operatorname{Tr}(#1,#2)}
\newcommand{\Rej}[2]{\operatorname{Rej}\left(#1,#2\right)}
\newcommand{\Ext}[4]{\operatorname{Ext}_{#1}^{#2}\left(#3,#4 \right)}
\newcommand{\B}[1]{\left(#1\right)}
\newcommand{\Gl}[1]{\operatorname{gl.dim}#1}
\newcommand{\Add}[1]{\operatorname{add}#1}
\newcommand{\LL}[1]{\operatorname{LL}(#1)}
\begin{document}

\title[The quasihereditary structure of the ADR algebra]{The quasihereditary structure of the Auslander-Dlab-Ringel algebra}
\author{Teresa Conde}
\address{Mathematical Institute,
University of Oxford, 
ROQ,
OX2 6GG, United Kingdom}
\email{\href{mailto:conde@maths.ox.ac.uk}{\nolinkurl{conde@maths.ox.ac.uk}}}
\thanks{The author would like to express her gratitude to the anonymous reviewer for the many valuable suggestions that led to substantial improvement of this paper. Namely, the author would like to thank the reviewer for noticing a mistake and providing a solution to it (Proposition \ref{prop:commoncf}), and for the simplified proof of Proposition \ref{prop:tiltinginj}. In addition, the author would like to thank her supervisor, Karin Erdmann, for her guidance and for the many useful suggestions in the preparation of this paper. This work was supported by FCT -- Funda\c{c}\~ao para a Ci\^encia e a Tecnologia, Portugal, through the grant SFRH/BD/84060/2012.}
\subjclass[2010]{Primary 16S50, 16W70. Secondary 16G10, 16G20.}
\keywords{Quasihereditary algebras, tilting modules, uniserial standard modules}
\date{May 1, 2016}

\begin{abstract}
Given an arbitrary algebra $A$ we may associate to it a special endomorphism algebra, $R_A$, introduced by Auslander. Dlab and Ringel constructed a heredity chain for $R_A$, proving that every algebra $A$ has an associated highest weight theory. In this paper we investigate the quasihereditary structure of $R_A$ using an axiomatic approach.
\end{abstract}

\maketitle

\section{Introduction}
\label{sec:intro}
Quasihereditary algebras were introduced in \cite{clineparshallscott} by Cline, Parshall and Scott, in order to deal with highest weight categories arising in the representation theory of Lie algebras and algebraic groups. This notion was extensively studied by Dlab and Ringel (\cite{MR987824}, \cite{MR943793}, \cite{MR1211481}, \cite[Appendix]{MR1284468}). Since the introduction of quasihereditary algebras, many classes of algebras arising naturally were shown to be quasihereditary.

A prototype for quasihereditary algebras are the Schur algebras, whose highest weight theory is that of general linear groups. They are the endomorphism algebras of certain modules over the group algebra of a symmetric group, and the algebra of the symmetric group can be seen as an idempotent subalgebra of the Schur algebra.

Thus it seemed natural that one can study an algebra $A$ by realising it as $(\xi R \xi, \xi)$ with $R$ quasihereditary and $\xi$ an idempotent in $R$. In \cite{MR0349747}, Auslander gave an explicit construction of an algebra $\tilde{R}_A$ and an idempotent $\xi \in \tilde{R}_A$ for every Artin algebra $A$, such that $\tilde{R}_A$ has finite global dimension, and $A$ is isomorphic to $(\xi \tilde{R}_A \xi, \xi)$. In \cite{MR943793}, Dlab and Ringel showed that this algebra $\tilde{R}_A$ is in fact quasihereditary. This may be rephrased by saying that any such $A$ has an associated highest weight theory.

In this paper, we study the basic algebra $R_A$ of $\tilde{R}_A$, where $A$ is a finite-di\-men\-sion\-al algebra over some field. We propose to call $R_A$ the \emph{Auslander-Dlab-Ringel algebra} (ADR algebra) of $A$. We show that $R_A$ satisfies the following two properties:
\begin{enumerate}
\item[(A1)] $\Rad{\Delta\B{i}}$ is either a standard module, or is zero;
\item[(A2)] if $\Rad{\Delta\B{i}}=0$ then the corresponding indecomposable injective module $Q_i$ has a filtration by standard modules (in other words, $Q_i$ is tilting).
\end{enumerate}
This motivates the following definition. Let $B$ be a quasihereditary algebra with respect to a poset $\B{\Phi, \sqsubseteq}$. We say that $B$ is \emph{ultra strongly quasihereditary} if it satisfies (A1) and (A2). This class of algebras is closed under Morita equivalence of quasihereditary algebras, since axioms (A1) and (A2) are expressed in terms of highest weight structures and of internal categorical constructions. By a result of Dlab and Ringel (\cite{yey}), condition (A1) implies that the category of modules with a $\Delta$-filtration is closed under submodules, and the algebras with this property were named ``strongly quasihereditary algebras" (\cite{RingelIyama}).

We prove several properties for algebras satisfying (A1) and (A2), and for their Ringel duals. In particular, we show that one can label the simple modules in a natural way by pairs $\B{i,j}$ so that $\Delta\B{i,j}$ has radical $\Delta\B{i,j+1}$ for $1 \leq j < l_i$ and $\Delta\B{i,l_i}$ is simple. As a main contribution of Section \ref{sec:costddinj}, we will prove the following (which corresponds to Theorem \ref{thm:qhstructA} and Proposition \ref{prop:tiltinginj}).
\begin{thma}
Let $B$ be an ultra strongly quasihereditary algebra. The injective hull $Q_{i,l_i}$ of the simple $B$-module with label $(i,l_i)$ has both a $\Delta$- and a $\nabla$-filtration. Moreover, the chain of inclusions
\[
0 \subset T\B{i,l_i} \subset \cdots \subset T\B{i,j} \subset \cdots \subset T\B{i,1}=Q_{i,l_i},
\]
where $T\B{i,j}$ is the tilting module corresponding to the label $\B{i,j}$, is the unique $\nabla$-filtration of $Q_{i,l_i}$. For $1\leq j < l_i$, the injective hull $Q_{i,j}$ of the simple module with label $(i,j)$ is isomorphic to $Q_{i,l_i}/T\B{i,j+1}$.
\end{thma}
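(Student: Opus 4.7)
The first assertion is immediate. Since $\Delta(i,l_i)$ is simple by the labelling, $\Rad{\Delta(i,l_i)}=0$, so axiom (A2) endows $Q_{i,l_i}$ with a $\Delta$-filtration; a $\nabla$-filtration of any injective module in a quasihereditary algebra is automatic. Hence $Q_{i,l_i}$, being an indecomposable injective, is an indecomposable tilting module. By BGG reciprocity $(Q_{i,l_i}:\nabla(\nu)) = [\Delta(\nu):L(i,l_i)]$, and the uniserial structure of standards established earlier (with $\Delta(k,m)$ having composition factors $L(k,m),\ldots,L(k,l_k)$) gives this multiplicity as $1$ exactly when $\nu=(i,m)$ for some $1\le m\le l_i$. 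Hence $Q_{i,l_i}$ admits a $\nabla$-filtration with factors $\nabla(i,1),\ldots,\nabla(i,l_i)$, each appearing once.

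The containment $\Rad{\Delta(i,j)}=\Delta(i,j+1)$ forces $(i,j+1)<(i,j)$ in the quasihereditary poset, so the labels $(i,l_i)<\cdots<(i,1)$ are totally ordered. Standard $\operatorname{Ext}^1$-vanishing between costandards then lets me reorder the $\nabla$-filtration so that smaller labels lie closer to the socle, yielding
\[
0 \subset N_{l_i} \subset N_{l_i-1} \subset \cdots \subset N_1 = Q_{i,l_i}, \qquad N_j/N_{j+1} \cong \nabla(i,j).
\]
Set $T(i,j) := N_j$. Its composition factors are precisely $L(i,k)$ with $k \ge j$, and $L(i,j)$ appears exactly once. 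I would identify each $T(i,j)$ with the indecomposable tilting of label $(i,j)$ by downward induction on $j$: the base $T(i,l_i) = L(i,l_i)$ is trivial; for the step, $T(i,j)$ inherits the $\nabla$-filtration, while a $\Delta$-filtration is lifted from $T(i,j+1)$ through the short exact sequence $0 \to T(i,j+1) \to T(i,j) \to \nabla(i,j) \to 0$, using $\operatorname{Ext}^1(\Delta(\mu), T(i,j+1)) = 0$ for all $\mu$ (a consequence of $T(i,j+1)$ being $\nabla$-filtered). Indecomposability follows because $(i,j)$ is the unique maximal label in the composition-factor support of $T(i,j)$.

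Uniqueness of the chain follows because each $T(i,j)$ is intrinsically the largest submodule of $Q_{i,l_i}$ whose composition factors lie in $\{L(i,k):k\ge j\}$ --- any such submodule has zero image in the quotient $Q_{i,l_i}/T(i,j)$, whose composition factors lie outside that set. For the identification $Q_{i,l_i}/T(i,j+1) \cong Q_{i,j}$, the quotient inherits a $\nabla$-filtration with factors $\nabla(i,j), \nabla(i,j-1), \ldots, \nabla(i,1)$, and an analogous maximality argument shows its socle is exactly $L(i,j)$. Since $(Q_{i,j}:\nabla(\nu)) = [\Delta(\nu):L(i,j)]$ is nonzero exactly for $\nu=(i,m)$ with $m\le j$, the modules $Q_{i,l_i}/T(i,j+1)$ and $Q_{i,j}$ share $\nabla$-factors and hence composition length. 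Injectivity of $Q_{i,j}$ extends the socle inclusion to a map $Q_{i,l_i}/T(i,j+1) \to Q_{i,j}$ with trivial kernel, which equality of lengths forces to be an isomorphism.

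The main technical obstacles are producing the $\Delta$-filtration of $T(i,j)$ in the inductive step and confirming that the socle of $Q_{i,l_i}/T(i,j+1)$ is simple --- both hinge on careful exact-sequence manipulations exploiting the maximality of $(i,j)$ within its family and the Ext-vanishing properties of $\nabla$-filtered modules.
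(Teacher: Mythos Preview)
Your overall strategy—compute the $\nabla$-factors of $Q_{i,l_i}$ via reciprocity, arrange them by label, then identify the terms of the resulting filtration with the tilting modules—is reasonable and not far from the paper's, but several steps have genuine gaps.

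The most serious is the $\Delta$-filtration of $N_j$. From $0\to T(i,j+1)\to N_j\to\nabla(i,j)\to 0$, the vanishing of $\operatorname{Ext}^1(\Delta(\mu),T(i,j+1))$ gives you nothing about whether $N_j\in\mathcal{F}(\Delta)$: the quotient $\nabla(i,j)$ is generally \emph{not} $\Delta$-filtered, and $\mathcal{F}(\Delta)$ is not closed under extensions by arbitrary modules. The paper's argument is different and immediate: axiom (A1) forces $\mathcal{F}(\Delta)$ to be closed under \emph{submodules} (Theorem~\ref{thm:dlabringelfil}), and $N_j\subseteq Q_{i,l_i}\in\mathcal{F}(\Delta)$. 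You never invoke this closure property, and without it your inductive step does not go through.

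Second, the assertion that the composition factors of $N_j$ are ``precisely $L(i,k)$ with $k\ge j$'' is false: each $\nabla(i,m)$ has composition factors $L_{k,l}$ for all $(k,l)\sqsubseteq(i,m)$, not only those with first index $i$. This undermines both your indecomposability argument (a tilting module with a unique maximal label can still split off smaller indecomposable tilting summands; the paper instead uses that $N_j\subseteq Q_{i,l_i}$ has simple socle) and your uniqueness argument (the quotient $Q_{i,l_i}/T(i,j)$, having $\nabla$-factors $\nabla(i,m)$ with $m<j$, \emph{does} contain composition factors $L_{k,l}$ with $l\ge j$, so your ``zero image'' claim fails). The paper handles uniqueness via a minimal-approximation argument, Lemma~\ref{lem:newone}(3). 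Finally, for $Q_{i,l_i}/T(i,j+1)\cong Q_{i,j}$ the paper avoids your socle-and-length comparison entirely: since $T(i,j+1)\in\mathcal{F}(\nabla)$ has injective dimension at most one (Theorem~\ref{thm:dlabringelfil} again), the quotient is automatically injective, and indecomposable by the uniqueness of its $\nabla$-filtration, hence equal to the injective hull of $\Soc{\nabla(i,j)}=L_{i,j}$.
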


The layout of the paper is the following. Section \ref{sec:prelim} contains background on quasihereditary algebras and on the ADR algebra. In Section \ref{sec:sect2}, we study the standard $R_A$-modules corresponding to the quasihereditary order $\B{\Lambda, \unlhd}$ of \cite{MR943793}. We prove that the uniserial projective $R_A$-modules described by Smal{\o} in \cite{smalo} are indeed standard modules with respect to $\B{\Lambda, \unlhd}$. In Section \ref{sec:qh}, we show that the algebra $R_A$ is quasihereditary with respect to $\B{\Lambda, \unlhd}$ -- our proof is different from that in \cite{MR943793}. Section \ref{sec:costddinj} introduces ultra strongly quasihereditary algebras. We prove the result on the labelling described previously, we construct the injective modules for these algebras and we prove Theorem \ref{thm:qhstructA}. Denote the Ringel dual of a quasihereditary algebra $B$ by $\mathcal{R}\B{B}$. In Section \ref{sec:rdual} we show that $\mathcal{R}\B{B} \op$ is ultra strongly quasihereditary whenever the algebra $B$ is an ultra strongly quasihereditary algebra. In Section \ref{sec:last} we determine a presentation of $R_A$ by quiver and relations when $A$ is a certain Brauer tree algebra, which occurs for example in the representation theory of the symmetric group.

\section{Preliminaries}
\label{sec:prelim}
Throughout this paper the word `algebra' will mean finite-dimensional $K$-algebra, where $K$ is some fixed field. Furthermore, all modules will be finite-dimensional left modules.

\subsection{The ADR algebra of \texorpdfstring{$A$}{[A]}}
\label{subsec:adralg}
Fix an algebra $A$. Given a module $M$, we shall denote its Loewy length by $\LL{M}$, that is, $\LL{M}$ is the minimal natural number such that $\RAD{\LL{M}}{M}=0$. Let $A$ have Loewy length $L$ (as a left module). We want to study the basic version of the endomorphism algebra of
\[
\bigoplus_{j=1}^L A/ \B{\Rad{A}}^j.
\]
This will have multiplicities in general.

Let $\{P_1, \ldots, P_n\}$ be a complete irredundant set of projective indecomposable $A$-modules and let $l_i$ be the Loewy length of $P_i$. Define
\[
G:=\bigoplus_{i=1}^{n} \bigoplus_{j=1}^{l_i} P_i/ \RAD{j}{P_i}.
\]
The modules $P_i/ \RAD{j}{P_i}$ are indecomposable and pairwise non-isomorphic, and these are precisely the indecomposable summands of $\bigoplus_{j=1}^L A/(\Rad{A})^j$ (up to isomorphism).

The algebra
\[
R=R_A:=\End{A}{G}\op ,
\]
which we call the \emph{ADR algebra of $A$}, is then a basic algebra of
\[\tilde{R}_A:=\END{A}{\bigoplus_{j=1}^L A/ \B{\Rad{A}}^j}\op .\]
The projective indecomposable $R$-modules are given by
\[
P_{i,j}:=\Hom{A}{G}{P_i/ \RAD{j}{P_i}},
\]
for $1 \leq i \leq n$, $1 \leq j \leq l_i$. Let $\xi \in R$ be the idempotent corresponding to the summand $\bigoplus_{i=1}^n P_{i,l_i}$ of $R$. Notice that $\xi R \xi$ is a basic algebra of $A$.

Denote the simple quotient of $P_{i,j}$ by $L_{i,j}$ and define
\[
\Lambda := \{ (i,j): 1 \leq i \leq n, \, 1 \leq j \leq l_i \},
\]
so that $\Lambda$ labels the simple $R$-modules.

The notation $\Mod{A}$ will be used for the category of (finite-dimensional) $A$-modules and, for every $M$ in $\Mod{A}$, $\Add{M}$ will denote the full subcategory of $\Mod{A}$ whose objects are the summands of finite direct sums of copies of $M$. We say that a set of modules (or a single module) $\Theta$ in $\Mod{A}$ \emph{generates} a module $M$, if $M$ is the image of some map $f$ whose domain is a (finite) direct sum of modules in $\Theta$. The notion of cogeneration is defined dually.

Since $G$ generates $A$, the functor $\Hom{A}{G}{-}$ has rather nice properties. Indeed, the functor
\[
\Hom{A}{G}{-}: \Mod{A} \longrightarrow \Mod{R}
\]
is fully faithful and it is right adjoint to the exact functor $\Hom{R}{\Hom{A}{G}{A}}{-}$. This implies that $\Hom{A}{G}{-}$ preserves injectives. Moreover, the restriction of $\Hom{A}{G}{-}$ to $\Add{G}$ yields an equivalence between the categories $\Add{G}$ and $\Add{R}$. A detailed account of the properties of this adjunction can be found in \cite[§$8$--§$10$]{MR0349747}. 

\subsection{Quasihereditary algebras}
\label{subsec:qhalg}
Given an algebra $B$ and a partial order $\B{\Phi, \sqsubseteq}$ labelling the simple $B$-modules, one defines the \emph{standard module} $\Delta(i)$, $i \in \Phi$, to be the largest quotient of $P_i$ with all composition factors of the form $L_j$, where $j \sqsubseteq i$. Here $L_i$ denotes the simple $B$-module with label $i \in \Phi$, and $P_i$ represents the projective $B$-module with top $L_i$. Let $Q_i$ be the injective $B$-module with socle $L_i$. The \emph{costandard module} $\nabla(i)$ is defined dually, by replacing `quotient' by `submodule', and $P_i$ by $Q_i$. The set of standard $B$-modules (resp.~costandard $B$-modules) is denoted by $\Delta$ (resp.~$\nabla$). Following \cite{MR1211481}, we say that the poset $\B{\Phi, \sqsubseteq}$ is \emph{adapted} to $B$ if the following holds: for every module $M$ with simple top $L_i$ and simple socle $L_j$, where $i$ and $j$ are incomparable in $\B{\Phi, \sqsubseteq}$, there is $k \in \Phi$ such that $k\sqsupset i$ or $k\sqsupset j$, and $[M:L_k]\neq 0$. Here $[M:L]$ denotes the Jordan-H\"{o}lder multiplicity of a simple module $L$ in $M$.

Any set of modules, $\Theta$, gives rise to the extension closed category $\mathcal{F}\B{\Theta}$ of all modules having a $\Theta$-filtration, i.e.~a filtration whose factors lie in $\Theta$ (up to isomorphism). The categories $\mathcal{F}\B{\Delta}$ and $\mathcal{F}\B{\nabla}$ are of central interest.

There are different equivalent ways of defining a quasihereditary algebra. We shall adopt the module theoretic perspective of \cite{MR1211481}. 
\begin{defi}
\label{defi:qh}
The algebra $B$ is \emph{quasihereditary} with respect to $\B{\Phi, \sqsubseteq}$ provided that:
\begin{enumerate}
\item $\B{\Phi, \sqsubseteq}$ is adapted to $B$;
\item the multiplicity of $L_i$ in $\Delta(i)$ is one for all $i \in \Phi$;
\item the projective modules lie in $\mathcal{F}\B{\Delta}$.
\end{enumerate}
\end{defi}
In this case we may write $(B, \Phi, \sqsubseteq)$. If $(B, \Phi, \sqsubseteq)$ is quasihereditary the dual of (3) also holds: the injective $B$-modules lie in $\mathcal{F}\B{\nabla}$.

Given a quasihereditary algebra $\B{B,\Phi, \sqsubseteq}$ and a module $M$ in $\mathcal{F}\B{\Delta}$, denote the multiplicity of $\Delta\B{i}$ in a $\Delta$-filtration of $M$ by $(M:\Delta\B{i})$. This number is independent of a choice of a $\Delta$-filtration, thus it is well defined. Quasihereditary algebras satisfy a Brauer–Humphreys type of reciprocity, which reduces to the identities $(P_i:\Delta\B{j})=[\nabla\B{j}:L_i]$ and $(Q_i:\nabla\B{j})=[\Delta\B{j}:L_i]$ when the field $K$ is algebraically closed (\cite[Lemma $2.5$]{MR1211481}). More generally, we have the following well-known result, which follows from \cite[Lemma $2.4$]{MR1211481}.
\begin{lem}
\label{lem:brauerhumph}
Let $\B{B, \Phi, \sqsubseteq}$ be a quasihereditary algebra. Let $M$ and $N$ be $B$-modules, with $M \in \mathcal{F}\B{\Delta}$ and $N \in \mathcal{F}\B{\nabla}$. Then, for $i \in \Phi$,
\begin{align*}
&(M:\Delta\B{i}) = \dim_{\End{B}{\nabla\B{i}}}{\Hom{B}{M}{\nabla\B{i}}}, \\
&(N:\nabla\B{i}) = \dim_{\End{B}{\Delta\B{i}}\op}{\Hom{B}{\Delta\B{i}}{N}}.
\end{align*}
\end{lem}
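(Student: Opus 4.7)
The plan is to prove the first identity by induction on the length $n$ of a $\Delta$-filtration of $M$; the second will follow dually, using a $\nabla$-filtration of $N$. The two algebraic inputs I would isolate first are: (i) the orthogonality $\Hom{B}{\Delta\B{j}}{\nabla\B{i}} = 0$ for $i \neq j$, together with $\Hom{B}{\Delta\B{i}}{\nabla\B{i}}$ being free of rank one both as a left $\End{B}{\nabla\B{i}}$-module and as a right $\End{B}{\Delta\B{i}}$-module; and (ii) the Ext-vanishing $\Ext{B}{1}{\Delta\B{j}}{\nabla\B{i}} = 0$ for all $i,j$.

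For the orthogonality, suppose $f \colon \Delta\B{j} \to \nabla\B{i}$ is nonzero with image $U$. Since $U$ is a nonzero quotient of $\Delta\B{j}$, which has simple top $L_j$, $L_j$ is a composition factor of $U$; as $U \subseteq \nabla\B{i}$ has composition factors among $\{L_k : k \sqsubseteq i\}$, this forces $j \sqsubseteq i$. Dually, $L_i$ lies in $\Soc{U}$ while the composition factors of $U$ (viewed as a quotient of $\Delta\B{j}$) are $\sqsubseteq j$, giving $i \sqsubseteq j$; hence $i = j$. In this diagonal case, the conditions $[\Delta\B{i}:L_i] = [\nabla\B{i}:L_i] = 1$ force every nonzero such map to factor as $\Delta\B{i} \twoheadrightarrow L_i \hookrightarrow \nabla\B{i}$, yielding the identification $\Hom{B}{\Delta\B{i}}{\nabla\B{i}} \cong \End{B}{L_i}$; the rank-one freeness over each endomorphism ring is precisely the content of the cited \cite[Lemma $2.4$]{MR1211481}. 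The Ext-vanishing (ii) is a classical consequence of the quasihereditary axioms, obtainable by dimension shifting along a $\Delta$-filtration of $P_j$ and using projectivity.

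For the induction, the case $n = 0$ is immediate. For $n \geq 1$, choose a $\Delta$-subfiltration yielding a short exact sequence $0 \to M' \to M \to \Delta\B{j} \to 0$ with $M' \in \mathcal{F}\B{\Delta}$ of filtration length $n-1$. Applying $\Hom{B}{-}{\nabla\B{i}}$ and invoking (ii), the long exact Ext-sequence collapses to a short exact sequence of left $\End{B}{\nabla\B{i}}$-modules
\[
0 \to \Hom{B}{\Delta\B{j}}{\nabla\B{i}} \to \Hom{B}{M}{\nabla\B{i}} \to \Hom{B}{M'}{\nabla\B{i}} \to 0.
\]
Since $\End{B}{\nabla\B{i}}$ is a division ring (finite-dimensional and embedding into $\End{B}{L_i}$ via socle restriction, as $[\nabla\B{i}:L_i]=1$), dimensions are additive, and combining with (i) and the induction hypothesis yields $(M : \Delta\B{i}) = \delta_{ij} + (M' : \Delta\B{i}) = \dim_{\End{B}{\nabla\B{i}}} \Hom{B}{M}{\nabla\B{i}}$.

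The main obstacle is the rank-one freeness in (i) over a general field $K$: one must verify that $\Hom{B}{\Delta\B{i}}{\nabla\B{i}}$ is free of rank one over $\End{B}{\nabla\B{i}}$ (rather than merely one-dimensional over $K$, which would over-count by $\dim_K \End{B}{L_i}$), so that the $\End$-dimension matches the integer multiplicity $(M:\Delta\B{i})$. This is exactly the subtle point handled by \cite[Lemma $2.4$]{MR1211481}, which I would cite directly rather than reprove.
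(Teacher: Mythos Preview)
The paper does not give its own proof of this lemma; it states the result as ``well-known'' and refers the reader to \cite[Lemma~$2.4$]{MR1211481}. Your argument is correct and is exactly the standard proof one would expect to find behind that citation: Hom-orthogonality of $\Delta$'s against $\nabla$'s off the diagonal, the identification $\Hom{B}{\Delta\B{i}}{\nabla\B{i}}\cong\End{B}{L_i}$ on the diagonal, $\Ext{}{1}{}{}$-vanishing between $\Delta$'s and $\nabla$'s, and induction on filtration length.

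One small remark: you note that $\End{B}{\nabla\B{i}}$ embeds into $\End{B}{L_i}$ via restriction to the socle, but for the rank-one claim you actually need this embedding to be an isomorphism (otherwise $\End{B}{L_i}$ could have dimension greater than one over $\End{B}{\nabla\B{i}}$). Surjectivity holds because any endomorphism of $L_i$ extends to $Q_i$ by injectivity, and the extension preserves the submodule $\nabla\B{i}$ since $\nabla\B{i}$ is the largest submodule of $Q_i$ with all composition factors $\sqsubseteq i$. You correctly flag this as the delicate point and defer to the cited lemma, which is entirely appropriate here.
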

\section{The standard modules}
\label{sec:sect2}

Following the notation introduced in Subsection \ref{subsec:adralg}, recall that the set $\Lambda = \{ (i,j): 1 \leq i \leq n, \, 1 \leq j \leq l_i \}$ labels the simple modules over the ADR algebra $R$. Define a partial order, $\unlhd$, on $\Lambda$ by
\[
(i,j) \lhd (k,l) \Leftrightarrow j> l.
\]

We shall see, in Section \ref{sec:qh}, that the ADR algebra $R$ is quasihereditary with respect to $\B{\Lambda, \unlhd}$.
In this section, we describe the standard $R$-modules $\Delta\B{i,j}$ with respect to $\B{\Lambda, \unlhd}$. For this, two ingredients are needed. The following result, due to Smal{\o}, is crucial. 
 
\begin{prop}[{\cite[Proposition 2.1]{smalo}}]
\label{prop:smalo}
The modules $P_{1,1}, \ldots, P_{n,1}$ form a complete irredundant list of projective $R$-modules without proper projective submodules. Each projective $P_{i,1}$ is uniserial with Loewy length $l_i$ and, for every $(i,j)$ in $\Lambda$, we have the following short exact sequences
\[
\begin{tikzcd}[ampersand replacement=\&]
0 \arrow{r} \& \Hom{A}{G}{\Rad{P_i}/ \RAD{j}{P_i}} \arrow{r} \& P_{i,j} \arrow{r} \& \RAD{j-1}{P_{i,1}} \arrow{r} \& 0
\end{tikzcd}.
\]
\end{prop}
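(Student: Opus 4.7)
The plan is to exploit the equivalence $\Hom{A}{G}{-}\colon \Add G \xrightarrow{\sim} \Add R$ recalled at the end of Subsection~\ref{subsec:adralg}: it is fully faithful, sends $P_i/\RAD{j}{P_i}$ to $P_{i,j}$, and, $G$ being a generator, also reflects monomorphisms (if $f$ has kernel $K$, then $\Hom{A}{G}{K}=\Ker{\Hom{A}{G}{f}}=0$ forces $K=0$). Consequently, projective submodules of $P_{i,j}$ correspond bijectively to submodules of $P_i/\RAD{j}{P_i}$ that lie in $\Add G$; applied to $P_{i,1}=\Hom{A}{G}{L_i}$, simplicity of $L_i$ already shows that $P_{i,1}$ has no proper non-zero projective submodule.

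For the short exact sequence I would apply $\Hom{A}{G}{-}$ to
\[
0 \longrightarrow \Rad{P_i}/\RAD{j}{P_i} \longrightarrow P_i/\RAD{j}{P_i} \xrightarrow{\pi} L_i \longrightarrow 0
\]
and use left exactness; the only nontrivial task is to identify the image of $\Hom{A}{G}{\pi}\colon P_{i,j} \to P_{i,1}$ with $\RAD{j-1}{P_{i,1}}$. Writing $\pi_m\colon P_i/\RAD{m}{P_i} \to L_i$ for the canonical quotient, one has $P_{i,1} = \bigoplus_{m=1}^{l_i} \Hom{A}{P_i/\RAD{m}{P_i}}{L_i}$ with each summand one-dimensional over $\End{A}{L_i}$ and spanned by $\pi_m$. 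The image of $\Hom{A}{G}{\pi}$ consists of those $f\colon G \to L_i$ that lift through $P_i/\RAD{j}{P_i}$; on the summand $P_i/\RAD{m}{P_i}$ this reduces to the existence of a map $P_i/\RAD{m}{P_i} \to P_i/\RAD{j}{P_i}$ whose composition with $\pi$ equals $\pi_m$, and a short Nakayama-style argument through the projective cover $P_i \twoheadrightarrow P_i/\RAD{m}{P_i}$ shows such a lift exists precisely when $m\geq j$. This simultaneously exhibits a proper non-zero projective submodule of $P_{i,j}$ whenever $j>1$, closing the first claim of the proposition.

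It remains to show $P_{i,1}$ is uniserial of Loewy length $l_i$ and that the image just computed coincides with $\RAD{j-1}{P_{i,1}}$. A composition-factor count gives $[P_{i,1}:L_{k,m}] = \delta_{ki}$, so the factors are precisely $L_{i,1},\dots,L_{i,l_i}$. The $R$-action on $P_{i,1}$ is pre-composition, and the same lifting argument shows that the submodule of $P_{i,1}$ generated by $\pi_m$ is the span of $\{\pi_n : n\geq m\}$; hence $\Rad{P_{i,1}}$ is generated by $\pi_2$ and, iterating, $\RAD{k}{P_{i,1}}$ by $\pi_{k+1}$, which yields a strictly decreasing radical filtration of length $l_i$ with simple quotients. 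I expect the main obstacle to be the single Nakayama-type computation characterising when $\pi_m$ lifts through $P_i/\RAD{j}{P_i}$: it is one fact that pins down the short exact sequence, the uniserial structure of $P_{i,1}$, and the characterisation of projectives without proper projective submodules all at once, so most of the effort should go into stating and verifying it cleanly.
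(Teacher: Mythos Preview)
The paper does not supply its own proof of this proposition: it is quoted from Smal\o's paper and used as a black box, with the subsequent Corollary~\ref{cor:smaloconsequence} being the first statement the paper actually proves. So there is no argument in the paper to compare against; your outline simply has to stand on its own, and it essentially does.

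Your identification of the image of $\Hom{A}{G}{\pi}$ with the span of $\{\pi_m : m\geq j\}$ is correct, and the Nakayama/Loewy-length criterion is exactly the right tool: a map $P_i/\RAD{m}{P_i}\to P_i/\RAD{j}{P_i}$ can be surjective only when $m\geq j$, and otherwise its image lies in the radical and is killed by $\pi$. Combined with the composition-factor count $[P_{i,1}:L_{k,m}]=\delta_{ki}$, the filtration $V_m=\langle\pi_m\rangle$ has exactly $l_i$ nonzero terms, so each successive quotient is forced to be simple by Jordan--H\"older; since $V_m$ is a quotient of $P_{i,m}$ it has simple top $L_{i,m}$, whence $V_{m+1}=\Rad V_m$ and uniseriality follows together with $\RAD{j-1}{P_{i,1}}=V_j$.

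One point is not transparent as written: the sentence ``This simultaneously exhibits a proper non-zero projective submodule of $P_{i,j}$ whenever $j>1$''. The lifting argument produces maps \emph{from} $P_i/\RAD{m}{P_i}$ into $P_i/\RAD{j}{P_i}$, not monomorphisms into it, so it does not directly hand you a projective submodule of $P_{i,j}$. The clean fix is immediate from your earlier observation that $\Hom{A}{G}{-}$ reflects monics: for $j>1$ the socle $\RAD{j-1}{P_i}/\RAD{j}{P_i}$ of $P_i/\RAD{j}{P_i}$ is nonzero semisimple, hence lies in $\Add G$, and its image under $\Hom{A}{G}{-}$ is a proper nonzero projective submodule of $P_{i,j}$.
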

\begin{cor}
\label{cor:smaloconsequence}
For $1 \leq j \leq l_i$, the module $\RAD{j-1}{P_{i,1}}$ is uniserial and has composition factors $L_{i,j}, \cdots , L_{i,l_i}$, labelled from the top to the socle.
\end{cor}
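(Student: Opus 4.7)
The plan is to extract the corollary directly from Proposition \ref{prop:smalo} by combining uniseriality with a top-counting argument using the short exact sequence.

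First, by Proposition \ref{prop:smalo}, $P_{i,1}$ is uniserial of Loewy length $l_i$. Since any submodule of a uniserial module is uniserial and the submodules are exactly the terms of the radical filtration, it follows immediately that $\RAD{j-1}{P_{i,1}}$ is uniserial of Loewy length $l_i - j + 1$ for each $1 \leq j \leq l_i$. In particular $\RAD{j-1}{P_{i,1}}$ is nonzero in this range. So the only thing left to identify is the list of composition factors.

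Next I would identify the top. The short exact sequence of Proposition \ref{prop:smalo},
\[
0 \longrightarrow \Hom{A}{G}{\Rad{P_i}/\RAD{j}{P_i}} \longrightarrow P_{i,j} \longrightarrow \RAD{j-1}{P_{i,1}} \longrightarrow 0,
\]
provides a surjection $P_{i,j} \twoheadrightarrow \RAD{j-1}{P_{i,1}}$. Hence $\Top{\RAD{j-1}{P_{i,1}}}$ is a quotient of $\Top{P_{i,j}} = L_{i,j}$; since $\RAD{j-1}{P_{i,1}} \neq 0$ and its top is simple (uniseriality), we conclude $\Top{\RAD{j-1}{P_{i,1}}} \cong L_{i,j}$.

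Finally, uniseriality gives
\[
\RAD{j-1}{P_{i,1}} / \RAD{j}{P_{i,1}} \; \cong \; \Top{\RAD{j-1}{P_{i,1}}} \; \cong \; L_{i,j}
\]
for each $1 \leq j \leq l_i$, which is precisely the statement that the composition factors of $\RAD{j-1}{P_{i,1}}$ from top to socle are $L_{i,j}, L_{i,j+1}, \ldots, L_{i,l_i}$. There is no real obstacle here; the only potentially tricky step is checking that $\RAD{j-1}{P_{i,1}}$ really is nonzero in the stated range so that the top argument applies, but this is immediate from the Loewy length assertion in Proposition \ref{prop:smalo}.
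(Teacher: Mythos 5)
Your argument is correct and follows essentially the same route as the paper: uniseriality of $P_{i,1}$ from Proposition \ref{prop:smalo} gives uniseriality of $\RAD{j-1}{P_{i,1}}$, and the short exact sequence exhibits $\RAD{j-1}{P_{i,1}}$ as a quotient of $P_{i,j}$, identifying its simple top as $L_{i,j}$ for each $j$. The paper phrases the final step by fixing $j$ and running $k$ through the radical filtration of $\RAD{j-1}{P_{i,1}}$, while you run $j$ itself through $1,\ldots,l_i$; these are the same calculation.
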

\begin{proof}
By Proposition \ref{prop:smalo}, the projective indecomposable module $P_{i,1}$ has Loewy length $l_i$ and is uniserial. Thus, the module $\RAD{j-1}{P_{i,1}}$ is also uniserial and has Loewy length $l_i - j +1$. Note that $\RAD{k}{(\RAD{j-1}{P_{i,j}})}= \RAD{k+j-1}{P_{i,j}}$. By Proposition \ref{prop:smalo}, this module has a simple top isomorphic to $L_{i,k+j}$, for $0 \leq k \leq l_i-j$.
\end{proof}
The next lemma will also be used to determine the structure of the standard $R$-modules. Its proof can be found in \cite{MR0349747}, within the proof of Proposition~$10.2$.
\begin{lem}
\label{lem:verynice}
Let $M$ be in $\Mod{A}$. There is an epic $\varepsilon: X_0 \longrightarrow M$, with $X_0$ in $\Add{G}$ satisfying $\LL{X_0}= \LL{M}$, such that $\Hom{A}{G}{\varepsilon}$ is the projective cover of $\Hom{A}{G}{M}$ in $\Mod{R}$.
\end{lem}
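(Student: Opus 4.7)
The plan is to obtain $X_0$ by transporting the projective cover of $\Hom{A}{G}{M}$ back from $\Mod R$ to $\Add G$ through the equivalence $\Hom{A}{G}{-}\colon \Add G \xrightarrow{\sim} \Add R$, and then to verify the two required properties of $\varepsilon$ separately.

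Concretely, let $\pi\colon Q \to \Hom{A}{G}{M}$ be the projective cover of $\Hom{A}{G}{M}$ in $\Mod R$, so that $Q \in \Add R$. Via the equivalence $\Add G \simeq \Add R$ one obtains $X_0 \in \Add G$ with $\Hom{A}{G}{X_0} \cong Q$, and the full faithfulness of $\Hom{A}{G}{-}$ on $\Mod A$ shows that $\pi = \Hom{A}{G}{\varepsilon}$ for a unique morphism $\varepsilon\colon X_0 \to M$.

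To check that $\varepsilon$ is surjective, I would use that $A \cong \bigoplus_{i} P_i = \bigoplus_i P_i/\RAD{l_i}{P_i}$ is a direct summand of $G$, hence $G$ generates $M$; fix an epimorphism $f\colon G^n \twoheadrightarrow M$. Since $\Hom{A}{G}{G^n}\cong R^n$ is projective and $\pi$ is surjective, $\Hom{A}{G}{f}$ lifts to some map $R^n \to \Hom{A}{G}{X_0}$, which by full faithfulness comes from a unique $\tilde f\colon G^n \to X_0$ satisfying $\varepsilon \tilde f = f$. The surjectivity of $f$ then forces the surjectivity of $\varepsilon$.

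Finally, for the Loewy length, set $k := \LL M$. Surjectivity of $\varepsilon$ immediately gives $\LL{X_0} \geq k$. For the reverse inequality, decompose $X_0 = \bigoplus_{i,j}(P_i/\RAD{j}{P_i})^{a_{ij}}$ and form
\[
X_0' := X_0/\RAD{k}{X_0} \cong \bigoplus_{i,j}\bigl(P_i/\RAD{\min(j,k)}{P_i}\bigr)^{a_{ij}},
\]
which still lies in $\Add G$. Because $\RAD{k}{M}=0$, the map $\varepsilon$ factors as $X_0 \twoheadrightarrow X_0' \xrightarrow{\varepsilon'} M$. Applying $\Hom{A}{G}{-}$ and using that the resulting composition equals $\pi$ forces $\Hom{A}{G}{\varepsilon'}$ to be surjective as well. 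Right minimality of the projective cover $\pi$ then produces a split monomorphism $\Hom{A}{G}{X_0} \hookrightarrow \Hom{A}{G}{X_0'}$, and the equivalence $\Add G \simeq \Add R$ transports this back to exhibit $X_0$ as a direct summand of $X_0'$. Comparing the multiplicity of each indecomposable $P_i/\RAD{j}{P_i}$ on both sides for $j > k$ (namely $a_{ij}$ in $X_0$ but $0$ in $X_0'$) via Krull--Schmidt yields $a_{ij}=0$ whenever $j > k$, and hence $\LL{X_0} \leq k$.

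The main obstacle I anticipate is the multiplicity bookkeeping in the final step: one must describe precisely how the functor $X\mapsto X/\RAD{k}{X}$ reshuffles the indecomposable summands of $X_0$ (the summands of Loewy length exceeding $k$ all collapse onto $P_i/\RAD{k}{P_i}$) before comparing multiplicities; modulo this, the argument is a clean application of the equivalence $\Add G \simeq \Add R$ together with Krull--Schmidt.
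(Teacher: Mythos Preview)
Your argument is correct. The paper itself does not supply a proof of this lemma: it simply refers the reader to Auslander \cite{MR0349747}, where the statement appears inside the proof of Proposition~10.2. So there is no ``paper's own proof'' to compare against here.

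Your route---pull back the projective cover through the equivalence $\Add G \simeq \Add R$, use full faithfulness to get $\varepsilon$, lift an epimorphism $G^n\twoheadrightarrow M$ through $\pi$ to show $\varepsilon$ is surjective, and then use right minimality plus Krull--Schmidt for the Loewy length---is a clean and standard one. The right-minimality step is fine: since $\Hom{A}{G}{X_0'}$ is projective (as $X_0'\in\Add G$), lifting $\Hom{A}{G}{\varepsilon'}$ along $\pi$ and composing with $\Hom{A}{G}{q}$ gives an endomorphism of $Q$ through which $\pi$ factors, hence an automorphism, so $\Hom{A}{G}{q}$ is indeed split monic. Your anticipated ``obstacle'' about multiplicity bookkeeping is not really one: you only need that no summand $P_i/\RAD{j}{P_i}$ with $j>k$ occurs in $X_0'$, which is immediate from $\LL{X_0'}\le k$.

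An alternative (and arguably shorter) way to obtain the Loewy-length bound, closer in spirit to Auslander's original treatment, is to construct $X_0$ directly as the projective cover of $M$ over $A/(\Rad A)^k$ with $k=\LL M$; this automatically gives $X_0\in\Add G$ with $\LL{X_0}=k$, and one then checks that $\Hom{A}{G}{\varepsilon}$ is a projective cover. Your approach trades that module-theoretic construction for a categorical one via the equivalence; both are perfectly valid.
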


Given a set of modules (or a single module) $\Theta$ and a module $M$ in $\Mod{A}$, define the \emph{trace of $\Theta$ in $M$}, $\Tr{\Theta}{M}$, to be the largest submodule of $M$ generated by $\Theta$ (see \cite[§8]{MR1245487}). If $B$ is an algebra endowed with a labelling poset $\B{\Phi, \sqsubseteq}$ (as in Subsection \ref{subsec:qhalg}), then $\Delta\B{i}=P_{i}/\TR{\bigoplus_{j: j \not\sqsubseteq i}P_j}{P_i}$ (see \cite[Lemma $1.1$]{MR1211481}).

\begin{prop}
\label{prop:standard}
The standard $R$-modules are uniserial. In fact,
\[
\Delta\B{i,j}\cong \RAD{j-1}{P_{i,1}},
\]
for every $(i,j)$ in $\Lambda$.
\end{prop}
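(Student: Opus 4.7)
My plan is to identify $\Delta(i,j)$ with the quotient of $P_{i,j}$ appearing in the short exact sequence of Proposition~\ref{prop:smalo}. By \cite[Lemma~$1.1$]{MR1211481},
\[
\Delta(i,j) = P_{i,j}\,/\,\TR{\bigoplus_{(k,l) \ntrianglelefteq (i,j)} P_{k,l}}{P_{i,j}} = P_{i,j}\,/\,\TR{\bigoplus_{l<j} P_{k,l}}{P_{i,j}},
\]
so it suffices to show that this trace coincides with the submodule $\Hom{A}{G}{\Rad{P_i}/\RAD{j}{P_i}}$ of $P_{i,j}$ coming from Proposition~\ref{prop:smalo}. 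Once this is established, the quotient $\RAD{j-1}{P_{i,1}}$ is uniserial with the claimed composition factors by Corollary~\ref{cor:smaloconsequence}.

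For the inclusion $\TR{\bigoplus_{l<j} P_{k,l}}{P_{i,j}} \subseteq \Hom{A}{G}{\Rad{P_i}/\RAD{j}{P_i}}$, the idea is to look at the quotient $\RAD{j-1}{P_{i,1}}$: by Corollary~\ref{cor:smaloconsequence} its composition factors are $L_{i,j}, \ldots, L_{i,l_i}$, all of which have second coordinate at least $j$. Hence for any $(k,l)$ with $l<j$, the simple $L_{k,l}$ does not occur as a composition factor of $\RAD{j-1}{P_{i,1}}$, so $\Hom{R}{P_{k,l}}{\RAD{j-1}{P_{i,1}}}=0$. Consequently every map $P_{k,l}\to P_{i,j}$ with $l<j$ factors through $\Hom{A}{G}{\Rad{P_i}/\RAD{j}{P_i}}$, giving the desired inclusion.

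For the reverse inclusion, I would invoke Lemma~\ref{lem:verynice}. Put $M := \Rad{P_i}/\RAD{j}{P_i}$, so that $\LL{M}\leq j-1$. The lemma yields an epic $\varepsilon:X_0\twoheadrightarrow M$ with $X_0\in\Add{G}$ and $\LL{X_0}\leq j-1$, such that $\Hom{A}{G}{\varepsilon}$ is the projective cover of $\Hom{A}{G}{M}$ in $\Mod{R}$. Since the indecomposable summand $P_k/\RAD{l}{P_k}$ of $G$ has Loewy length $l$, the condition $\LL{X_0}\leq j-1$ forces $X_0$ to be a sum of summands indexed by pairs $(k,l)$ with $l<j$. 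Applying $\Hom{A}{G}{-}$ (and using the equivalence between $\Add{G}$ and $\Add{R}$), the image of $\Hom{A}{G}{\varepsilon}$ is precisely $\Hom{A}{G}{M}$, and it is generated by projectives $P_{k,l}$ with $l<j$. Therefore $\Hom{A}{G}{M}\subseteq \TR{\bigoplus_{l<j} P_{k,l}}{P_{i,j}}$, completing the proof.

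The main obstacle is the second inclusion: one has to recognise that the Loewy-length control supplied by Lemma~\ref{lem:verynice} is exactly what singles out the projective summands $P_{k,l}$ with $l<j$. The first inclusion is a short composition-factor argument once Corollary~\ref{cor:smaloconsequence} has pinned down the structure of $\RAD{j-1}{P_{i,1}}$.
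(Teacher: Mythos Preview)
Your argument is correct and follows essentially the same route as the paper: both proofs identify $\Delta(i,j)$ with $\RAD{j-1}{P_{i,1}}$ by showing that the trace submodule $\TR{\bigoplus_{l<j}P_{k,l}}{P_{i,j}}$ coincides with $\Hom{A}{G}{\Rad{P_i}/\RAD{j}{P_i}}$, using Corollary~\ref{cor:smaloconsequence} for one inclusion and Lemma~\ref{lem:verynice} for the other. The only cosmetic difference is that the paper packages the inclusion $\text{Tr}\subseteq\text{Hom}$ via a commutative diagram (the epic $\Delta(i,j)\twoheadrightarrow\RAD{j-1}{P_{i,1}}$ induces a monic $g$ on kernels), whereas you argue it directly from the vanishing of $\Hom{R}{P_{k,l}}{\RAD{j-1}{P_{i,1}}}$ for $l<j$; these are the same observation.
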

\begin{proof}
By Proposition \ref{prop:smalo} and Corollary \ref{cor:smaloconsequence}, the module $\RAD{j-1}{P_{i,1}}$ is a quotient of $P_{i,j}$, and it has composition factors $L_{i,j}, \ldots, L_{i,l_i}$ (ordered from top to socle). So, by the definition of standard module, there must be an epic $f$ from $\Delta\B{i,j}$ to $\RAD{j-1}{P_{i,1}}$. Therefore we have the following commutative diagram
\[
\begin{tikzcd}[ampersand replacement=\&]
0 \arrow{r} \& \Tr{\bigoplus_{(k,l): (k,l)\not\unlhd (i,j)}P_{k,l}}{P_{i,j}} \arrow{r} \arrow[dashed, hook]{d}{\exists \, g} \& P_{i,j} \arrow{r} \arrow[equal]{d} \& \Delta\B{i,j} \arrow{r} \arrow[two heads]{d}{f} \& 0 \\
0 \arrow{r} \& \Hom{A}{G}{\Rad{P_i}/ \RAD{j}{P_i}} \arrow{r} \& P_{i,j} \arrow{r} \& \RAD{j-1}{P_{i,1}} \arrow{r} \& 0
\end{tikzcd}.
\]

Further, since $\LL{\Rad{P_i}/ \RAD{j}{P_i}}= j-1$, it follows from Lemma \ref{lem:verynice} that $\Hom{A}{G}{\Rad{P_i}/ \RAD{j}{P_i}}$ is generated by projectives $P_{k,l}$, such that $l<j$ (so $\B{k,l}\ntrianglelefteq \B{i,j}$). By the definition of trace, the inclusion map is an injection of $\Hom{A}{G}{\Rad{P_i}/ \RAD{j}{P_i}}$ into $\TR{\bigoplus_{(k,l): (k,l)\not\unlhd (i,j)}P_{k,l}}{P_{i,j}}$. Hence the composite of $g$ with this is one-to-one. But then the monic $g$ must be an isomorphism. Note that $\Ker{f} \cong \Coker{g}$, so the epic $f$ must be an isomorphism as well.\end{proof}

Observe that
\begin{equation}
\label{eq:radofstdd}
\Rad{\Delta\B{i,j}}=\Rad{\B{\RAD{j-1}{P_{i,1}}}}=
\begin{cases}
             \Delta\B{i,j+1}  & \text{if } j < l_i, \\
             0  & \text{if } j = l_i.
       \end{cases}
\end{equation}
Therefore $\Rad{\Delta\B{i,j}}$, which is the unique maximal submodule of $\Delta\B{i,j}$, belongs to $\mathcal{F}\B{\Delta}$ for all $(i,j)$ in $\Lambda$.

The next lemma can be found in \cite[Lemma $2$]{yey}. We state it for the convenience of the reader.
\begin{lem}
\label{lem:dlabringelfil}
Let $\Theta$ be a set of modules. Assume that for any $M$ in $\Theta$, every maximal submodule of $M$ has a $\Theta$-filtration. Then the category $\mathcal{F}\B{\Theta}$ is closed under submodules.
\end{lem}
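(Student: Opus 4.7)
The plan is to prove the statement by induction on $\dim X$, where $X \in \mathcal{F}(\Theta)$ and $Y \subseteq X$ is the submodule whose $\Theta$-filtration we want to produce. The base case $\dim X = 0$ is trivial. For the inductive step, fix a $\Theta$-filtration $0 = X_0 \subset X_1 \subset \cdots \subset X_n = X$ of $X$ and distinguish two cases according to whether $n = 1$ or $n > 1$.

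In the case $n > 1$, I would set $Y' := Y \cap X_{n-1}$ and observe that $X_{n-1}$ lies in $\mathcal{F}(\Theta)$ and has strictly smaller dimension than $X$, so by induction $Y' \in \mathcal{F}(\Theta)$. The quotient $Y/Y'$ injects into $X/X_{n-1} \cong X_n/X_{n-1} \in \Theta$, a module of dimension $< \dim X$. Hence the induction hypothesis applied to $Y/Y' \hookrightarrow X/X_{n-1}$ (viewed as an element of $\mathcal{F}(\Theta)$) yields $Y/Y' \in \mathcal{F}(\Theta)$. Since $\mathcal{F}(\Theta)$ is closed under extensions, it follows that $Y \in \mathcal{F}(\Theta)$.

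The key step is the case $n = 1$, i.e.\ when $X = M \in \Theta$ itself. If $Y = M$, there is nothing to do. Otherwise, $Y$ is contained in some maximal submodule $M'$ of $M$. Here the hypothesis of the lemma is used: by assumption $M' \in \mathcal{F}(\Theta)$. Since $\dim M' < \dim M = \dim X$, the induction hypothesis applied to the pair $Y \subseteq M'$ gives $Y \in \mathcal{F}(\Theta)$, completing the inductive step.

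The main obstacle is really just spotting that the induction must be set up on $\dim X$ (rather than on the length of a chosen $\Theta$-filtration of $X$), because passing to a maximal submodule $M'$ of some $M \in \Theta$ in general enlarges the filtration length while it strictly decreases dimension. Once the induction is arranged this way, the hypothesis that maximal submodules of objects of $\Theta$ lie in $\mathcal{F}(\Theta)$ is exactly what is needed to feed a smaller instance back into the induction, and the extension-closure of $\mathcal{F}(\Theta)$ takes care of gluing the two pieces $Y \cap X_{n-1}$ and $Y/(Y \cap X_{n-1})$ together.
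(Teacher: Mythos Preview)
Your argument is correct. The paper does not actually supply a proof of this lemma: it only quotes the statement and refers the reader to Dlab and Ringel \cite{yey}. So there is no ``paper's proof'' to compare against, but your induction on $\dim X$ is precisely the standard argument (and is the one given in the cited reference): reduce to the case $X\in\Theta$ by intersecting $Y$ with the penultimate term $X_{n-1}$ of a $\Theta$-filtration and using extension-closure, and then in the case $X\in\Theta$ pass to a maximal submodule containing $Y$, where the hypothesis kicks in. One small point worth making explicit is that in the case $n>1$ both $\dim X_{n-1}<\dim X$ and $\dim(X/X_{n-1})<\dim X$ rely on the filtration having strict inclusions (equivalently, on the members of $\Theta$ being nonzero); this is harmless but should be said.
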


By Lemma \ref{lem:dlabringelfil} and by the identity \eqref{eq:radofstdd}, the subcategory $\mathcal{F}\B{\Delta}$ of $\Mod{R}$ is closed under submodules. This suggests that there are many $R$-modules having a $\Delta$-filtration. In fact, the category $\mathcal{F}\B{\Delta}$ is at least as large as $\Mod{A}$. 
\begin{lem}
\label{lem:efedeltahoms}
Let $M$ be in $\Mod{A}$. The $R$-module $\Hom{A}{G}{M}$ belongs to $\mathcal{F}\B{\Delta}$.
\end{lem}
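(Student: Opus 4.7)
The plan is to proceed by induction on the Loewy length of $M$. The base case covers $\LL{M} \leq 1$: if $M$ is semisimple, write $M \cong \bigoplus_i L_i^{m_i}$. Each simple $L_i = P_i/\RAD{1}{P_i}$ is a summand of $G$, so $\Hom{A}{G}{M} \cong \bigoplus_i P_{i,1}^{m_i}$, and by Proposition \ref{prop:standard} one has $P_{i,1} = \RAD{0}{P_{i,1}} = \Delta\B{i,1}$. Thus $\Hom{A}{G}{M}$ is a direct sum of standard modules and therefore lies in $\mathcal{F}\B{\Delta}$.

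For the inductive step, assume the claim holds for all $A$-modules of Loewy length strictly smaller than $\LL{M}$. I would apply the left exact functor $\Hom{A}{G}{-}$ to the canonical short exact sequence $0 \to \Rad{M} \to M \to M/\Rad{M} \to 0$ to obtain an exact sequence
\[
0 \longrightarrow \Hom{A}{G}{\Rad{M}} \longrightarrow \Hom{A}{G}{M} \longrightarrow \Hom{A}{G}{M/\Rad{M}}.
\]
Since $\LL{\Rad{M}} < \LL{M}$, the induction hypothesis gives $\Hom{A}{G}{\Rad{M}} \in \mathcal{F}\B{\Delta}$. The image $I$ of the rightmost map is a submodule of $\Hom{A}{G}{M/\Rad{M}}$, and $M/\Rad{M}$ being semisimple, the base case gives $\Hom{A}{G}{M/\Rad{M}} \in \mathcal{F}\B{\Delta}$. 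Because $\mathcal{F}\B{\Delta}$ is closed under submodules (as observed in the paragraph preceding the lemma, via Lemma \ref{lem:dlabringelfil} and the identity \eqref{eq:radofstdd}), we get $I \in \mathcal{F}\B{\Delta}$. The short exact sequence
\[
0 \longrightarrow \Hom{A}{G}{\Rad{M}} \longrightarrow \Hom{A}{G}{M} \longrightarrow I \longrightarrow 0
\]
then lies entirely inside the extension-closed subcategory $\mathcal{F}\B{\Delta}$, so $\Hom{A}{G}{M} \in \mathcal{F}\B{\Delta}$.

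There is no serious obstacle. The one subtlety is that $\Hom{A}{G}{-}$ need not be right exact, so the rightmost map in the exact sequence above is generally not surjective; this is exactly neutralised by the closure of $\mathcal{F}\B{\Delta}$ under submodules, which lets us replace the codomain by its image while staying inside $\mathcal{F}\B{\Delta}$. A tempting alternative via the projective cover $\varepsilon\colon X_0 \to M$ of Lemma \ref{lem:verynice} would produce a short exact sequence $0 \to \Hom{A}{G}{\Ker{\varepsilon}} \to \Hom{A}{G}{X_0} \to \Hom{A}{G}{M} \to 0$, but this route seems less convenient because $\mathcal{F}\B{\Delta}$ is \emph{not} closed under quotients.
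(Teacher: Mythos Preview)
Your argument is correct and coincides with the paper's proof: both induct on Loewy length, apply $\Hom{A}{G}{-}$ to $0\to\Rad{M}\to M\to M/\Rad{M}\to 0$, and use closure of $\mathcal{F}(\Delta)$ under submodules (from Lemma~\ref{lem:dlabringelfil} and \eqref{eq:radofstdd}) to handle the image of the rightmost map before invoking extension closure. Your remark about the non-exactness of $\Hom{A}{G}{-}$ and why closure under submodules resolves it is exactly the point the paper makes implicitly with its factorisation diagram.
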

\begin{proof}
By Proposition \ref{prop:standard}, the result holds if $\LL{M}=1$. Assume the claim holds for modules with Loewy length $l-1$ and let $M$ have Loewy length $l$. The functor $\Hom{A}{G}{-}$ maps the short exact sequence 
\[
\begin{tikzcd}[ampersand replacement=\&]
0 \arrow{r} \& \Rad{M} \arrow{r} \& M \arrow{r} \& M/ \Rad{M} \arrow{r} \& 0
\end{tikzcd}
\]
to
\[
\begin{tikzcd}[ampersand replacement=\&, column sep=tiny]
0 \arrow{r} \& \Hom{A}{G}{\Rad{M}} \arrow{r} \& \Hom{A}{G}{M} \arrow{r} \arrow[two heads]{dr} \& \Hom{A}{G}{M/ \Rad{M}} \\
\& \& \& \Hom{A}{G}{M}/\Hom{A}{G}{\Rad{M}} \arrow[hook]{u}
\end{tikzcd}.
\]
By induction, $\Hom{A}{G}{\Rad{M}}$ lies in $\mathcal{F}\B{\Delta}$, and by the initial case, the module $\Hom{A}{G}{M/ \Rad{M}}$ belongs to $\mathcal{F}\B{\Delta}$ as well. According to Lemma \ref{lem:dlabringelfil}, $\mathcal{F}\B{\Delta}$ is closed under submodules, so
\[
\Hom{A}{G}{M}/\Hom{A}{G}{\Rad{M}} \in \mathcal{F}\B{\Delta}.
\]
The result follows from the fact that $\mathcal{F}\B{\Delta}$ is closed under extensions.
\end{proof}
\section{The ADR algebra is quasihereditary}
\label{sec:qh}

The ADR algebra is quasihereditary with respect to the heredity chain constructed by Dlab and Ringel in \cite{MR943793}. The underlying order in \cite{MR943793} can be shown to be the same as our partial order $\B{\Lambda, \unlhd}$. Instead of going into details about heredity chains, we give a different prove that $R$ is quasihereditary with respect to $\B{\Lambda, \unlhd}$.
\begin{lem}
\label{lem:adapted}
The partial order $\B{\Lambda, \unlhd}$ for the simple $R$-modules is an adapted order for $R$.
\end{lem}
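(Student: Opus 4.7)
The plan is to unwrap what incomparability and ``$\sqsupset$'' mean in $(\Lambda,\unlhd)$ and then exploit the very rigid structure of the standard modules $\Delta(i,j)$ obtained in Proposition \ref{prop:standard}. Two labels $(i,j)$ and $(k,l)$ are incomparable precisely when $j=l$ and $i\neq k$; and $(k',l')\rhd(i,j)$ (equivalently $(k',l')\rhd(k,l)$) means simply $l'<j$. So, given an $R$-module $M$ with simple top $L_{i,j}$ and simple socle $L_{k,j}$ with $i\neq k$, I need to produce a composition factor $L_{k',l'}$ of $M$ with $l'<j$.

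I would argue by contradiction: assume every composition factor $L_{k',l'}$ of $M$ satisfies $l'\geq j$, i.e.~$(k',l')\unlhd(i,j)$. The key observation is that for any primitive idempotent $e_{k',l'}$ with $l'<j$, the absence of $L_{k',l'}$ as a composition factor of $M$ forces $\Hom_R(P_{k',l'},M)=0$ (a non-zero map from $P_{k',l'}$ would have image with top $L_{k',l'}$, hence $L_{k',l'}$ would occur in $M$). Consequently, the trace $\TR{\bigoplus_{(k',l')\ntrianglelefteq(i,j)}P_{k',l'}}{P_{i,j}}$ is contained in the kernel of any surjection $P_{i,j}\twoheadrightarrow M$ coming from the simple top, and by the description of standard modules as $P_{i,j}/\TR{\bigoplus_{(k',l')\ntrianglelefteq(i,j)}P_{k',l'}}{P_{i,j}}$ recalled before Proposition \ref{prop:standard}, $M$ must be a quotient of $\Delta(i,j)$.

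Now I invoke Proposition \ref{prop:standard} together with Corollary \ref{cor:smaloconsequence}: $\Delta(i,j)\cong\RAD{j-1}{P_{i,1}}$ is uniserial with composition factors $L_{i,j},L_{i,j+1},\ldots,L_{i,l_i}$ from top to socle. Every non-zero quotient of a uniserial module is uniserial with the \emph{same} top, and its simple socle is again of the form $L_{i,j''}$ for some $j''\geq j$; in particular the first coordinate of the socle is $i$. But $M$ has socle $L_{k,j}$ with $k\neq i$, which is the desired contradiction. Hence some $L_{k',l'}$ with $l'<j$ is a composition factor of $M$, and such a $(k',l')$ satisfies $(k',l')\rhd(i,j)$, proving that $(\Lambda,\unlhd)$ is adapted.

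The main obstacle is not technical but conceptual: one needs to recognise that the asymmetric, ``uniserial with shrinking $j$'' shape of the standard modules from Proposition \ref{prop:standard} is exactly what makes the order adapted, once incomparability is translated into the condition $j=l$. After that translation the argument is a short combination of the trace description of $\Delta(i,j)$ and uniseriality.
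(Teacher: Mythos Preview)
Your strategy is natural, but there is a real gap at the step ``$M$ is a quotient of $\Delta(i,j)$''. You write ``$l'\geq j$, i.e.\ $(k',l')\unlhd(i,j)$'', and this is false: $(k',l')\unlhd(i,j)$ means $l'>j$ or $(k',l')=(i,j)$, so pairs $(k',j)$ with $k'\neq i$ satisfy $l'\geq j$ yet are \emph{not} $\unlhd(i,j)$. In particular your own socle $L_{k,j}$ is such a composition factor, so $P_{k,j}$ contributes to the trace $\Tr{\bigoplus_{(k',l')\ntrianglelefteq(i,j)}P_{k',l'}}{P_{i,j}}$ while $\Hom{R}{P_{k,j}}{M}\neq 0$; the ``Consequently'' therefore does not follow from what you established.

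The repair is short but needs an extra ingredient. By Propositions~\ref{prop:smalo} and~\ref{prop:standard} the kernel of $P_{i,j}\twoheadrightarrow\Delta(i,j)$ is $\Hom{A}{G}{\Rad{P_i}/\RAD{j}{P_i}}$, and Lemma~\ref{lem:verynice} (as used in the proof of Proposition~\ref{prop:standard}) shows this is generated by projectives $P_{k',l'}$ with $l'<j$. Hence that kernel already equals $\Tr{\bigoplus_{l'<j}P_{k',l'}}{P_{i,j}}$, and now your vanishing $\Hom{R}{P_{k',l'}}{M}=0$ for $l'<j$ really does force it into the kernel of $P_{i,j}\twoheadrightarrow M$. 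With this in place the uniseriality contradiction works.

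The paper's own proof is different in flavour: it does not argue by contradiction or pass through the standard modules. It lifts a nonzero map $P_{k,j}\to N$ to $t_*\colon P_{k,j}\to P_{i,j}$, writes $t_*=\Hom{A}{G}{t}$ for a non-isomorphism $t\colon P_k/\RAD{j}{P_k}\to P_i/\RAD{j}{P_i}$ in $\Mod{A}$, and observes that $\Ima{t}$ has Loewy length at most $j-1$, so $t$ (and hence $t_*$) factors through some $P_{x,y}$ with $y<j$, producing the required composition factor directly. The fact you need for the repair---that any map $P_{k',j}\to P_{i,j}$ with $k'\neq i$ has image generated by $\{P_{x,y}:y<j\}$---is essentially this same observation read back into $\Mod{R}$.
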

\begin{proof}
Let $N$ be an indecomposable $R$-module. Suppose that $\Top{N}=L_{i,j}$ and $\Soc{N}=L_{k,l}$, with $(i,j)$ and $(k,l)$ incomparable with respect to $\unlhd$, i.e.~with $j=l$ and $i \neq k$. There is a nonzero morphism $f$ and a commutative diagram
\[
\begin{tikzcd}[ampersand replacement=\&]
\& P_{k,l} \arrow[dashed]{dl}[swap]{\exists \, t_*} \arrow{d}{f} \\
P_{i,l} \arrow[two heads]{r} \& N
\end{tikzcd}.
\]
Now $t_*=\Hom{A}{G}{t}$ for some $t:P_k/\RAD{l}{P_k} \longrightarrow P_i/\RAD{l}{P_i}$. The map $t$ must be a non-isomorphism since $k \neq i$. So $\Ima{t}$ is generated by a module in
\[
\mathcal{C}=\Add{\B{\bigoplus_{(x,y): y \leq l-1} P_x/\RAD{y}{P_x}}}.
\]
By the projectivity of $P_k/\RAD{l}{P_k}$ in $\Mod{(A/\B{\Rad{A}}^l)}$, we conclude that $t$ factors through a module in $\mathcal{C}$. Hence $t_*$ factors through a module in
\[
\Add{\B{\bigoplus_{(x,y): y \leq l-1} P_{x,y}}}.
\]
But then $N$ must have a composition factor of the form $L_{x,y}$ for some $x$ and some $y<l$, i.e.~for some pair $(x,y)$ such that $(x,y) \rhd (k,l)$.
\end{proof}
\begin{thm}
The algebra $R$ is quasihereditary with respect to $\B{\Lambda, \unlhd}$.
\end{thm}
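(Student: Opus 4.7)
The plan is to verify the three conditions of Definition \ref{defi:qh} in turn. Condition (1), that $(\Lambda, \unlhd)$ is adapted, is already established by Lemma \ref{lem:adapted}, so there is nothing more to do there.

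For condition (2), I would invoke Proposition \ref{prop:standard} together with Corollary \ref{cor:smaloconsequence}. The former identifies $\Delta(i,j)$ with $\RAD{j-1}{P_{i,1}}$, and the latter says this is uniserial with composition factors $L_{i,j}, L_{i,j+1}, \ldots, L_{i,l_i}$ from top to socle. Since the first index $i$ is fixed and the second index strictly increases along the series, all composition factors are pairwise non-isomorphic; in particular $L_{i,j}$ occurs exactly once in $\Delta(i,j)$.

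For condition (3), I would show that each indecomposable projective $P_{i,j}$ lies in $\mathcal{F}(\Delta)$. The short exact sequence from Proposition \ref{prop:smalo} reads
\[
0 \longrightarrow \Hom{A}{G}{\Rad{P_i}/ \RAD{j}{P_i}} \longrightarrow P_{i,j} \longrightarrow \RAD{j-1}{P_{i,1}} \longrightarrow 0.
\]
By Proposition \ref{prop:standard}, the quotient is $\Delta(i,j)$, so it trivially has a $\Delta$-filtration. By Lemma \ref{lem:efedeltahoms}, applied to the $A$-module $\Rad{P_i}/\RAD{j}{P_i}$, the submodule $\Hom{A}{G}{\Rad{P_i}/ \RAD{j}{P_i}}$ also belongs to $\mathcal{F}(\Delta)$. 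Since $\mathcal{F}(\Delta)$ is closed under extensions, we conclude $P_{i,j} \in \mathcal{F}(\Delta)$, as required.

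None of the three steps looks genuinely difficult here, since all the heavy lifting has been done earlier: Smal\o's description of $P_{i,j}$, the identification of standard modules as the uniserial radicals of the $P_{i,1}$, and the key fact that $\Hom{A}{G}{-}$ sends $\Mod{A}$ into $\mathcal{F}(\Delta)$. The only mildly subtle point to be careful about is that in condition (3) one really needs the full $\Delta$-filtration of the kernel, not merely membership in some larger category; but this is exactly what Lemma \ref{lem:efedeltahoms} supplies, so the proof reduces to a short assembly of the quoted results.
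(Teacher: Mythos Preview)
Your proof is correct and follows the same approach as the paper: verify the three conditions of Definition \ref{defi:qh} using Lemma \ref{lem:adapted}, Proposition \ref{prop:standard} with Corollary \ref{cor:smaloconsequence}, and Lemma \ref{lem:efedeltahoms}. For condition (3) the paper is slightly more direct: since $P_{i,j}=\Hom{A}{G}{P_i/\RAD{j}{P_i}}$ by definition, one may apply Lemma \ref{lem:efedeltahoms} to $P_i/\RAD{j}{P_i}$ itself and conclude immediately, without passing through the short exact sequence and extension closure.
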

\begin{proof}
We check that $\B{R, \Lambda, \unlhd}$ satisfies conditions (1) to (3) in Definition \ref{defi:qh}. By Lemma \ref{lem:adapted}, the poset $\B{\Lambda, \unlhd}$ is adapted to $R$. Proposition \ref{prop:standard} and Corollary \ref{cor:smaloconsequence} imply that $[\Delta\B{i,j}: L_{i,j}]=1$. Finally, recall that $P_{i,j}=\Hom{A}{G}{P_i/\RAD{j}{P_i}}$. By Lemma \ref{lem:efedeltahoms}, the projective indecomposable $R$-modules lie in $\mathcal{F}\B{\Delta}$.
\end{proof}

The next result, due to Dlab and Ringel (\cite{yey}, \cite[Lemma $4.1$*]{MR1211481}), is stated for completeness.
\begin{thm}
\label{thm:dlabringelfil}
Let $\B{B, \Phi, \sqsubseteq}$ be a quasihereditary algebra. The following assertions are equivalent:
\begin{enumerate}
\item $\Rad{\Delta\B{i}} \in \mathcal{F}\B{\Delta}$ for all $i \in \Phi$;
\item $\mathcal{F}\B{\Delta}$ is closed under submodules;
\item for all $i$ in $\Phi$ the module $\nabla\B{i}$ has injective dimension at most one;
\item every module in $\mathcal{F}\B{\nabla}$ has injective dimension at most one;
\item every torsionless module (i.e.~every module cogenerated by projectives) belongs to $\mathcal{F}\B{\Delta}$.
\end{enumerate}
\end{thm}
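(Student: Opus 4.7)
The plan is to prove the cycle of implications $(1) \Leftrightarrow (2) \Rightarrow (5) \Rightarrow (3) \Rightarrow (2)$, together with the easy equivalence $(3) \Leftrightarrow (4)$. The equivalence $(1) \Leftrightarrow (2)$ is essentially free: the forward direction is Lemma \ref{lem:dlabringelfil} applied with $\Theta = \Delta$ (each $\Delta(i)$ lies in $\mathcal{F}\B{\Delta}$ tautologically), and the reverse is immediate since $\Rad{\Delta(i)}$ is a submodule of $\Delta(i) \in \mathcal{F}\B{\Delta}$. The implication $(2) \Rightarrow (5)$ is equally immediate: projective modules lie in $\mathcal{F}\B{\Delta}$ by the definition of quasihereditary, and torsionless modules are by definition submodules of projectives.

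The homological portion of the argument rests on two classical facts for quasihereditary algebras that I would invoke: (a) the Ext-orthogonality $\Ext{B}{k}{M}{N} = 0$ for $M \in \mathcal{F}\B{\Delta}$, $N \in \mathcal{F}\B{\nabla}$ and $k \geq 1$; and (b) the Ext-criterion that $M \in \mathcal{F}\B{\Delta}$ if and only if $\Ext{B}{1}{M}{\nabla(i)} = 0$ for every $i \in \Phi$. Granting these, for $(5) \Rightarrow (3)$ take any $B$-module $M$ with projective cover $P \twoheadrightarrow M$; the kernel $\Omega M \subset P$ is torsionless, so by $(5)$ it lies in $\mathcal{F}\B{\Delta}$, and dimension shifting combined with (a) gives
\[
\Ext{B}{2}{M}{\nabla(i)} \cong \Ext{B}{1}{\Omega M}{\nabla(i)} = 0,
\]
so $\nabla(i)$ has injective dimension at most one. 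For $(3) \Rightarrow (2)$, take $N \subseteq M$ with $M \in \mathcal{F}\B{\Delta}$; applying $\Hom{B}{-}{\nabla(i)}$ to $0 \to N \to M \to M/N \to 0$ yields the exact sequence
\[
\Ext{B}{1}{M}{\nabla(i)} \longrightarrow \Ext{B}{1}{N}{\nabla(i)} \longrightarrow \Ext{B}{2}{M/N}{\nabla(i)},
\]
whose left term vanishes by (b) applied to $M$ and whose right term vanishes by $(3)$. Hence $\Ext{B}{1}{N}{\nabla(i)} = 0$ for all $i$, so $N \in \mathcal{F}\B{\Delta}$ by (b).

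Finally, $(3) \Leftrightarrow (4)$ is routine: the nontrivial direction proceeds by induction on the length of a $\nabla$-filtration of $N \in \mathcal{F}\B{\nabla}$, using the long exact Ext-sequence attached to $0 \to N' \to N \to \nabla(i) \to 0$ to propagate vanishing of $\Ext{B}{2}{-}{-}$; the other direction is immediate. The main obstacle in this plan is establishing the Ext-criterion (b), which is classical but nontrivial — it is usually proved by showing that a module $M$ with $\Ext{B}{1}{M}{\nabla(i)} = 0$ for all $i$ admits a filtration whose successive quotients are standard modules, via a right $\mathcal{F}\B{\Delta}$-approximation argument. In a full exposition I would cite this fact from \cite[Appendix]{MR1284468}; the orthogonality (a) can be reduced to the identity $\Ext{B}{k}{\Delta(i)}{\nabla(j)} = 0$ for $k \geq 1$ by induction on $\Delta$- and $\nabla$-filtration lengths, using the long exact Ext-sequence and the extension-closure of $\mathcal{F}\B{\Delta}$ and $\mathcal{F}\B{\nabla}$.
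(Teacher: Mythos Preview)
Your argument is correct, but there is nothing in the paper to compare it against: the paper does not prove Theorem~\ref{thm:dlabringelfil} at all. It is quoted verbatim as a result of Dlab and Ringel, with the citation ``\cite{yey}, \cite[Lemma~4.1*]{MR1211481}'', and the sentence introducing it reads ``The next result\ldots is stated for completeness.'' So your write-up goes well beyond what the paper itself supplies.

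As a reconstruction of the standard proof, your cycle $(1)\Leftrightarrow(2)\Rightarrow(5)\Rightarrow(3)\Rightarrow(2)$ together with $(3)\Leftrightarrow(4)$ is sound. The use of Lemma~\ref{lem:dlabringelfil} for $(1)\Rightarrow(2)$ is exactly how the paper deploys that lemma elsewhere (the standard modules have simple top, so $\Rad\Delta(i)$ is the unique maximal submodule). You correctly isolate the two external inputs: the $\operatorname{Ext}$-orthogonality $\Ext{B}{\geq 1}{\mathcal{F}(\Delta)}{\mathcal{F}(\nabla)}=0$ and the characterisation $M\in\mathcal{F}(\Delta)\Leftrightarrow\Ext{B}{1}{M}{\nabla(i)}=0$ for all $i$. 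Both are available in \cite{MR1211481} (the first is Theorem~1 there, which the paper already cites in the proof of Lemma~\ref{lem:newone}; the second is part of the same circle of ideas around Lemma~4.1 and its dual), so your appeals to the literature are appropriate. The only cosmetic point: in $(5)\Rightarrow(3)$ you implicitly use that every first syzygy is torsionless, which is immediate since $\Omega M$ sits inside the projective cover of $M$; you might make that sentence explicit.
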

Consequently, assertions 1 to 4 hold for the quasihereditary structure of $R$, or, stated equivalently, $R$ is a \emph{right strongly quasihereditary algebra} (see \cite{RingelIyama}). Compare this statement with Observation (2) in \cite{RingelIyama} -- there the algebra $\Gamma$ is obtained by applying Iyama's construction to the regular module.

From now onwards denote the simple quotient of the $A$-module $P_i$ by $L_i$ and let $Q_i$ be the injective $A$-module with socle $L_i$. Similarly, let $Q_{i,j}$ be the injective $R$-module with socle $L_{i,j}$. We claim that the $R$-modules $Q_{i,l_i}$ have a $\Delta$-filtration.
\begin{lem}
\label{lem:niceinj}
The functor $\Hom{A}{G}{-}$ preserves indecomposable modules. In particular, $Q_{i,l_i}=\Hom{A}{G}{Q_i}$, and $\Hom{A}{G}{-}$ preserves injective hulls.
\end{lem}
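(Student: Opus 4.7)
The plan is to deduce everything from three properties of $\Hom{A}{G}{-}$ recorded in Subsection \ref{subsec:adralg}: it is fully faithful, it preserves injectives, and (by the very definition of $G$) each simple $A$-module $L_i = P_i/\RAD{1}{P_i}$ is a direct summand of $G$, so that $\Hom{A}{G}{L_i} \cong P_{i,1}$.

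First I would establish that $\Hom{A}{G}{-}$ preserves indecomposability. Full faithfulness gives a ring isomorphism $\End{A}{M} \cong \End{R}{\Hom{A}{G}{M}}$ for every $M$ in $\Mod{A}$. Since a finite-dimensional module over a finite-dimensional algebra is indecomposable precisely when its endomorphism algebra is local (the Krull--Schmidt criterion), this already forces indecomposability to be preserved. Applying this to the indecomposable injective $Q_i$, together with the preservation of injectives, shows that $\Hom{A}{G}{Q_i}$ is an indecomposable injective $R$-module, hence isomorphic to some $Q_{k,l}$ with simple socle. To identify $(k,l)$, I would apply the left exact functor $\Hom{A}{G}{-}$ to the inclusion $L_i \hookrightarrow Q_i$, obtaining a monomorphism $P_{i,1} \hookrightarrow \Hom{A}{G}{Q_i}$. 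By Corollary \ref{cor:smaloconsequence} one has $\Soc{P_{i,1}} = L_{i,l_i}$, and since $\Hom{A}{G}{Q_i}$ has a simple socle this forces $\Soc{\Hom{A}{G}{Q_i}} = L_{i,l_i}$, so $\Hom{A}{G}{Q_i} \cong Q_{i,l_i}$.

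Finally, for an arbitrary $M$ in $\Mod{A}$ with injective hull $M \hookrightarrow Q(M)$, I would decompose $Q(M) \cong \bigoplus_i Q_i^{a_i}$. By the previous step, $\Hom{A}{G}{Q(M)} \cong \bigoplus_i Q_{i,l_i}^{a_i}$ is injective and contains $\Hom{A}{G}{M}$ via the induced monomorphism. To conclude, I would verify that this inclusion is essential by showing that both modules share the socle $\bigoplus_i L_{i,l_i}^{a_i}$. One inclusion is automatic, namely $\Soc{\Hom{A}{G}{M}} \subseteq \Soc{\Hom{A}{G}{Q(M)}} = \bigoplus_i L_{i,l_i}^{a_i}$. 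For the reverse, essentiality of $M \hookrightarrow Q(M)$ places $\Soc{Q(M)} = \bigoplus_i L_i^{a_i}$ inside $M$, so applying $\Hom{A}{G}{-}$ embeds $\bigoplus_i P_{i,1}^{a_i}$, whose socle is $\bigoplus_i L_{i,l_i}^{a_i}$, into $\Hom{A}{G}{M}$. I expect the main subtlety to be this socle comparison; everything else is direct bookkeeping from Subsection \ref{subsec:adralg} and Corollary \ref{cor:smaloconsequence}.
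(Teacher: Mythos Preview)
Your proposal is correct and follows essentially the same route as the paper's proof: full faithfulness for indecomposability, the inclusion $L_i \hookrightarrow Q_i$ to identify $\Hom{A}{G}{Q_i}$ via the socle of $P_{i,1}$, and then, for general $M$, embedding $\bigoplus P_{i,1}^{a_i}$ into $\Hom{A}{G}{M}$ via $\Soc{M}\subseteq M$ to match socles with $\Hom{A}{G}{Q(M)}$. You are simply more explicit where the paper writes ``so the statement follows''; in particular your socle-comparison step spells out the essentiality argument that the paper leaves implicit.
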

\begin{proof}
The first assertion follows from the fact that $\Hom{A}{G}{-}$ is a fully faithful functor. Observe that $\Hom{A}{G}{-}$ also preserves injectives and note that the inclusion of $L_i$ in $Q_i$ induces a monic from $P_{i,1}$ (whose socle is $L_{i,l_i}$) to $\Hom{A}{G}{Q_i}$. So indeed $Q_{i,l_i}=\Hom{A}{G}{Q_i}$. Let now $M$ be in $\Mod{A}$ and suppose $\Soc{M}=\bigoplus_{j\in J}L_{x_j}$. Then $\bigoplus_{j\in J}P_{x_j,1}$ (whose socle is $\bigoplus_{j\in J}L_{x_j,l_{x_j}}$) is contained in $\Hom{A}{G}{M}$. Moreover, the functor $\Hom{A}{G}{-}$ maps the injective hull of $M$ to a monic from $\Hom{A}{G}{M}$ to $\bigoplus_{j\in J}Q_{x_j,l_{x_j}}$, so the statement follows.
\end{proof}

\section{Costandard, injectives and tilting modules}
\label{sec:costddinj}
Let $B$ be a quasihereditary algebra with respect to $(\Phi, \sqsubseteq)$. It was proved by Ringel in \cite{last} (see also Donkin, \cite{donkin}) that for every $i \in \Phi$ there is a unique indecomposable $B$-module $T\B{i}$ (up to isomorphism) which has both a $\Delta$- and a $\nabla$-filtration, with one composition factor labelled by $i$, and all the other composition factors labelled by $j$, $j\sqsubset i$. 

It is now standard to refer to a module in $\mathcal{F}\B{\Delta} \cap \mathcal{F}\B{\nabla}$ as a \emph{tilting module}. Let $T$ be the direct sum of the modules $T\B{i}$, $i \in \Phi$. This module is called the \emph{characteristic module} in \cite{last}, and it is such that $\Add{T}=\mathcal{F}\B{\Delta}\cap \mathcal{F}\B{\nabla}$.

Lemmas \ref{lem:efedeltahoms} and \ref{lem:niceinj} imply that the $R$-modules $Q_{i,l_i}$ belong to $\mathcal{F}\B{\Delta} \cap \mathcal{F}\B{\nabla} = \Add{T}$. Consequently, every module $Q_{i,l_i}$ is a direct summand of $T$.

In this section we:
\begin{enumerate}[(I)]
\item introduce the class of ultra strongly quasihereditary algebras, which contains the ADR algebras;
\item for $B$ an ultra strongly quasihereditary algebra, investigate the injective and the tilting modules -- our main results are Theorem \ref{thm:qhstructA} and Proposition~\ref{prop:tiltinginj}.
\end{enumerate}

So let $(B, \Phi , \sqsubseteq )$ be an arbitrary quasihereditary algebra, as before. Additionally, suppose that $B$ satisfies the following two conditions:
\begin{enumerate}
\item[(A1)] $\Rad{\Delta\B{i}} \in \Delta \cup \{0\}$ for all $i \in \Phi$;
\item[(A2)] $Q_i \in \mathcal{F}\B{\Delta}$ for all $i \in \Phi$ such that $\Rad{\Delta\B{i}}=0$.
\end{enumerate}
We call these algebras \emph{(right) ultra strongly quasihereditary} algebras. Note that the conditions in Theorem \ref{thm:dlabringelfil} hold for every ultra strongly quasihereditary algebra $\B{B, \Phi, \sqsubseteq}$. Moreover, the algebra $R_A$ is ultra strongly quasihereditary for every choice of $A$. However, notice that there are ultra strongly quasihereditary algebras which are not isomorphic to $R_A$ for any $A$.
\begin{ex}
\label{ex:example1}
Consider the path algebra $B=KQ$, where $Q$ is the quiver
\[
\begin{tikzcd}[ampersand replacement=\&]
  \overset{n}{\circ} \arrow{r} \&
    \overset{n-1}{\circ} \arrow{r} \&
      \cdots \arrow{r} \&
      \overset{1}{\circ}
\end{tikzcd}.
\]
The algebra $B$ is quasihereditary with respect to the natural ordering. Besides, $B$ satisfies (A1) and (A2). Yet $B$ is isomorphic to the quasihereditary algebra $R_A$ for some $A$ if and only if $n=1$.
\end{ex}

Let us start by stating some fundamental properties of the standard modules over an ultra strongly quasihereditary algebra.
\begin{lem}
\label{lem:littlelemma}
Let $\B{B, \Phi, \sqsubseteq }$ be an ultra strongly quasihereditary algebra. The standard $B$-modules are uniserial. Moreover, if $L_j$ is a composition factor of $\Delta\B{i}$, then $\Delta\B{j}$ is a submodule of $\Delta\B{i}$.
\end{lem}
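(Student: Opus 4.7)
The plan is a straightforward induction on the Loewy length of $\Delta\B{i}$, with axiom (A1) providing the inductive engine. For the base case $\LL{\Delta\B{i}}=1$, the module $\Delta\B{i}=L_i$ is simple, hence uniserial, and its sole composition factor $L_i$ trivially satisfies $\Delta\B{i}\hookrightarrow\Delta\B{i}$.

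For the inductive step, I would invoke (A1) to write $\Rad{\Delta\B{i}}=\Delta\B{k}$ for some $k\in\Phi$, since $\LL{\Delta\B{i}}>1$ forces the radical to be nonzero. The induction hypothesis then supplies uniseriality of $\Delta\B{k}$ together with the embedding property for its composition factors. To conclude that $\Delta\B{i}$ itself is uniserial, I would observe that $\Delta\B{i}$ has simple top $L_i$, hence is a local module with unique maximal submodule $\Rad{\Delta\B{i}}=\Delta\B{k}$; since every proper submodule lies inside $\Delta\B{k}$, the uniseriality of the latter forces the submodules of $\Delta\B{i}$ to form a chain.

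For the second assertion, let $L_j$ be a composition factor of $\Delta\B{i}$. Quasiheredity ensures that $[\Delta\B{i}:L_i]=1$, so the case $j=i$ occurs only at the top, giving the trivial embedding. If $j\neq i$, then $L_j$ is a composition factor of $\Rad{\Delta\B{i}}=\Delta\B{k}$, and induction yields $\Delta\B{j}\hookrightarrow\Delta\B{k}$; composing with the inclusion $\Delta\B{k}\hookrightarrow\Delta\B{i}$ completes the step.

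I expect no substantial obstacle: axiom (A1) trades the mystery of $\Rad{\Delta\B{i}}$ for the known structure of a strictly smaller standard module, and everything else reduces to bookkeeping. The one mildly subtle observation is that a local module with uniserial radical is itself uniserial, which is immediate from the uniqueness of the maximal submodule in a local module. Note that axiom (A2) plays no role here; (A1) alone suffices for both claims.
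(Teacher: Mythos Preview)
Your proof is correct. For uniseriality the paper says essentially what you do, only more tersely (``a consequence of (A1)''). For the second claim the paper takes a different tack: from $[\Delta\B{i}:L_j]\neq 0$ it produces a nonzero map $f:P_j\to\Delta\B{i}$; then $\Ima f$ is a submodule of the uniserial module $\Delta\B{i}$ with simple top $L_j$, and since (A1) forces every nonzero submodule of $\Delta\B{i}$ to be standard, one reads off $\Ima f\cong\Delta\B{j}$. Your inductive argument is arguably more self-contained---it never leaves the radical series of $\Delta\B{i}$---whereas the paper's projective-cover trick identifies the submodule in a single stroke once uniseriality is in hand. Both rely only on (A1), as you correctly observe.
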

\begin{proof}
The first part of the statement is a consequence of (A1). For the second part, as $L_j$ is a composition factor of $\Delta\B{i}$, there is a morphism $f:P_j \longrightarrow \Delta\B{i}$. So $\Ima{f}$ is a submodule of $\Delta\B{i}$ with simple top $L_j$. Therefore, we must have $\Ima{f} \cong \Delta\B{j}$.
\end{proof}

Given an ultra strongly quasihereditary algebra $\B{B, \Phi, \sqsubseteq}$, we may define a new order $\preceq$ on $\Phi$ by
\[
i\preceq j \Leftrightarrow \text{``}L_i\text{ is a composition factor of }\Delta\B{j}\text{"}.
\]
It follows from Lemma \ref{lem:littlelemma} that $\preceq$ is transitive and antisymmetric. Note that $\B{\Phi, \sqsubseteq}$ is a refinement of $\B{\Phi, \preceq}$, that is, $i\preceq j$ implies $i \sqsubseteq j$, $i, j \in \Phi$.
\begin{prop}
\label{prop:commoncf}

Let $\B{B, \Phi, \sqsubseteq }$ be an ultra strongly quasihereditary algebra. For each $i$ in $\Phi$, let $i^*$ be the element in $\Phi$ such that $\Soc{\Delta\B{i}}=L_{i^*}$. The following holds:
\begin{enumerate}
\item $L_{i^*}=\Delta\B{i^*}$ and $Q_{i^*} \in \mathcal{F}\B{\Delta}$;
\item if $i_1$ and $i_2$ are two maximal elements in $\B{\Phi, \preceq}$, and $\Delta\B{i_1}$ and $\Delta\B{i_2}$ have some composition factor in common, then $i_1=i_2$;
\item if $i$ is a maximal element in $\B{\Phi, \preceq}$ then $Q_{i^*}\cong T\B{i}$.
\end{enumerate}

\end{prop}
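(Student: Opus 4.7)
The plan is to handle the three parts in order. Part (1) is a short direct consequence of the axioms and Lemma \ref{lem:littlelemma}; part (2) requires a module-theoretic argument inside the common injective hull; and part (3) then synthesises these with tilting theory.

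For (1), Lemma \ref{lem:littlelemma} gives that $\Delta\B{i}$ is uniserial, so its socle is a single simple module $L_{i^*}$. Iterated application of (A1) shows that every radical power $\RAD{k}{\Delta\B{i}}$ is standard (or zero), so the socle itself is a simple standard module with top $L_{i^*}$, forcing $\Delta\B{i^*} = L_{i^*}$. Since $\Rad{\Delta\B{i^*}} = 0$, (A2) then delivers $Q_{i^*} \in \mathcal{F}\B{\Delta}$.

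For (2), I would first observe that a common composition factor $L_k$ of $\Delta\B{i_1}$ and $\Delta\B{i_2}$ forces $i_1^* = i_2^*$: by Lemma \ref{lem:littlelemma}, $\Delta\B{k}$ embeds into each $\Delta\B{i_t}$, and uniseriality identifies the socles of $\Delta\B{i_t}$ with $L_{k^*}$. Writing $m$ for this common $*$-value and invoking the injectivity of $Q_m$, both $\Delta\B{i_1}$ and $\Delta\B{i_2}$ embed into $Q_m$. Assume now $i_1 \neq i_2$ for contradiction, and set $M = \Delta\B{i_1} + \Delta\B{i_2}$ inside $Q_m$. By Theorem \ref{thm:dlabringelfil}, axiom (A1) makes $\mathcal{F}\B{\Delta}$ closed under submodules, and since $Q_m \in \mathcal{F}\B{\Delta}$ by (1), we have $M \in \mathcal{F}\B{\Delta}$. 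The nonzero submodule $\Delta\B{i_1} \cap \Delta\B{i_2}$ of the uniserial $\Delta\B{i_1}$ has the form $\Delta\B{s}$ for some $s \in \Phi$ (using iterated (A1) once more), and maximality of $i_1, i_2$ in $\B{\Phi, \preceq}$ rules out $s \in \{i_1, i_2\}$: e.g.\ $s = i_1$ would force $\Delta\B{i_1} \subseteq \Delta\B{i_2}$ and hence $i_1 \preceq i_2$, contradicting $i_1 \neq i_2$. The short exact sequence
\[
0 \longrightarrow \Delta\B{s} \longrightarrow \Delta\B{i_1} \oplus \Delta\B{i_2} \longrightarrow M \longrightarrow 0,
\]
with left map $x \mapsto (x,-x)$ and right map $(a,b) \mapsto a+b$, has all terms in $\mathcal{F}\B{\Delta}$, so additivity of $\Delta$-multiplicities (via Lemma \ref{lem:brauerhumph}) gives the absurd identity $(M:\Delta\B{s}) = -1$.

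For (3), by (1) the indecomposable injective $Q_{i^*}$ belongs to $\mathcal{F}\B{\Delta} \cap \mathcal{F}\B{\nabla}$, hence $Q_{i^*} \cong T\B{j}$ for a unique $j \in \Phi$, and one must show $j = i$. On the one hand, $\Delta\B{i} \hookrightarrow Q_{i^*} = T\B{j}$ makes $L_i$ a composition factor of $T\B{j}$; maximality of $i$ in $\B{\Phi, \preceq}$ shows that only $\Delta\B{i}$ contains $L_i$, so $(T\B{j} : \Delta\B{i}) \geq 1$, forcing $i \sqsubseteq j$. On the other hand, the inclusion $\Delta\B{j} \hookrightarrow T\B{j}$ yields $L_{j^*} = \Soc{\Delta\B{j}} \subseteq \Soc{Q_{i^*}} = L_{i^*}$, so $j^* = i^*$; picking a $\preceq$-maximal element $j' \succeq j$, Lemma \ref{lem:littlelemma} along the chain gives $j'^* = j^* = i^*$, and applying (2) to the $\preceq$-maximal pair $\{i, j'\}$ (which share the composition factor $L_{i^*}$) yields $j' = i$, whence $j \preceq i$ and $j \sqsubseteq i$. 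Combining both inclusions forces $j = i$. The main obstacle is part (2): one cannot shortcut it by appealing to the uniqueness of indecomposable tilting modules, since that is essentially what (3) itself provides, and the negative-multiplicity trick inside $Q_m$ is the decisive device that breaks this circularity.
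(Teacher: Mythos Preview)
Your argument is correct. Part (1) matches the paper. Parts (2) and (3) take genuinely different routes.

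For part (2), the paper works inside $Q_{i^*}$ as you do, but instead of forming the sum $M=\Delta\B{i_1}+\Delta\B{i_2}$, it picks a bottom term $\Delta\B{k}$ of a $\Delta$-filtration of $Q_{i^*}$, shows that one of the images (say $\Ima{\phi_1}$) is not contained in $\Delta\B{k}$, and then derives a contradiction by checking that the socle of the nonzero image of $\Ima{\phi_1}$ in $Q_{i^*}/\Delta\B{k}$ is a simple $L_z$ with $z\notin\Phi^*$, whereas every submodule of a module in $\mathcal{F}\B{\Delta}$ must have socle supported on $\Phi^*$. Your route via the short exact sequence $0\to\Delta\B{s}\to\Delta\B{i_1}\oplus\Delta\B{i_2}\to M\to 0$ and additivity of $\Delta$-multiplicities is shorter and avoids the case analysis on containment in $\Delta\B{k}$; the paper's argument, on the other hand, makes more visible the structural reason for the failure (a wrong socle appearing in a $\Delta$-filtered quotient). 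For part (3), the paper shows directly that every composition factor $L_x$ of $Q_{i^*}$ satisfies $x\sqsubseteq i$ by computing $(Q_{i^*}:\nabla\B{y})$ via Lemma~\ref{lem:brauerhumph} and using that $\Hom{B}{\Delta\B{y}}{Q_{i^*}}\neq 0$ forces $y^*=i^*$, hence $y\preceq i$; you instead write $Q_{i^*}=T\B{j}$ and squeeze $j$ between $i\sqsubseteq j$ (from $(T\B{j}:\Delta\B{i})\geq 1$) and $j\sqsubseteq i$ (from $j^*=i^*$ together with part (2)). Both arguments ultimately hinge on part (2) to pass from $y^*=i^*$ to $y\preceq i$, so the logical depth is the same; your formulation just isolates the two inequalities more explicitly.
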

\begin{proof}

By Lemma \ref{lem:littlelemma}, every standard module $B$-module is uniserial. In particular, the modules $\Delta\B{i}$ have simple socle. For every $i \in \Phi$, write $i^*$ for the label in $\Phi$ such that $L_{i^*}=\Soc{\Delta\B{i}}$. Denote by ${\Phi}^*$ the set of all $i^*$.

Part (1) follows from Lemma \ref{lem:littlelemma} and from axiom (A2) in the definition of ultra strongly quasihereditary algebra.

For part (2) suppose, by contradiction, that $i_1$ and $i_2$ are two distinct maximal elements in $\B{\Phi, \preceq}$ such that the modules $\Delta\B{i_1}$ and $\Delta\B{i_2}$ have some common composition factor. Then, by Lemma \ref{lem:littlelemma}, we must have ${i_1}^*={i_2}^*=j$. By the injectivity of $Q_j$ and the uniseriality of $\Delta\B{i_1}$ and $\Delta\B{i_2}$, we get that the inclusion $L_j \longrightarrow Q_j$ can be extended to monomorphisms $\phi_x: \Delta\B{i_x} \longrightarrow Q_j$, $x=1,2$. As $i_1$ and $i_2$ are distinct and both maximal with respect to $\preceq$, then 
\begin{equation}
\label{eq:proofreviewer}
\Ima{\phi_1}\not\subseteq\Ima{\phi_2}, \,\,\, \Ima{\phi_2}\not\subseteq\Ima{\phi_1}.
\end{equation}
Now, by part (1), $Q_j$ lies in $\mathcal{F}\B{\Delta}$, i.e.~$Q_j$ has $\Delta$-filtration. Let $\Delta\B{k} \subseteq Q_j$ be such that $Q_j/ \Delta\B{k} \in \mathcal{F}\B{\Delta}$. Set $N:=Q_j/ \Delta\B{k}$. Since $\mathcal{F}\B{\Delta}$ is closed under submodules, $\Soc{N}$ must be a direct sum of simple modules $L_y$, with $y \in {\Phi}^*$. We cannot have simultaneously $\Ima{\phi_1} \subseteq \Delta\B{k}$ and $\Ima{\phi_2} \subseteq \Delta\B{k}$: by \eqref{eq:proofreviewer}, these two inclusions would produce two different composition series of $\Delta\B{k}$, which is impossible by Lemma \ref{lem:littlelemma}. So suppose, without loss of generality, that $\Ima{\phi_1} \not \subseteq \Delta\B{k}$. Then
\[
\Ima{\phi_1}/\B{\Ima{\phi_1} \cap \Delta\B{k}} \cong \B{\Ima{\phi_1}+ \Delta\B{k}}/\Delta\B{k}=:N'
\]
is a nonzero submodule of $N$. Since $L_j\subseteq \Ima{\phi_1} \cap \Delta\B{k}$ and $\Ima{\phi_1}\cong \Delta\B{i_1}$, Lemma \ref{lem:littlelemma} implies that every composition factor $L_y$ of $\Ima{\phi_1}/\B{\Ima{\phi_1} \cap \Delta\B{k}}$ is such that $y^*=j$, but $y\neq j$. In particular, $\Soc{N'}=L_{z}$, for some $z\not\in \Phi^*$. This is impossible since $\Soc{N'}\subseteq \Soc{N}$ and all the summands of $\Soc{N}$ are of the form $L_y$ with $y \in \Phi^*$. We get a contradiction. 

We concluded that for every $j \in \Phi^*$ there is exactly one maximal element $i$ in $\B{\Phi, \preceq}$ such that $i^*=j$. For part (3), consider the module $Q_{i^*}$, where $i$ is a maximal element in $\B{\Phi, \preceq}$. Note that $Q_{i^*}$ lies in $\mathcal{F}\B{\Delta} \cap \mathcal{F}\B{\nabla}=\Add{T}$: this follows from part (1) and from the fact that $B$ is a quasihereditary algebra. To conclude that $Q_{i^*}\cong T\B{i}$ it is enough to show that $[Q_{i^*}:L_i]\neq 0$ and that all composition factors of $Q_{i^*}$ are of the form $L_{x}$, with $x\sqsubseteq i$. Since $Q_{i^*}$ is the injective hull of $\Delta\B{i}$, we have $[Q_{i^*}: L_i]\neq 0$. By the Brauer–Humphreys reciprocity (Lemma \ref{lem:brauerhumph}), we get
\[
(Q_{i^*}: \nabla\B{y})=\dim_{\End{B}{\Delta\B{y}}\op}{\Hom{B}{\Delta\B{y}}{Q_{i^*}}}.
\]
So, for $(Q_{i^*}: \nabla\B{y})$ to be nonzero, we must have $y^*=i^*$, or equivalently, $y\preceq i$. Taking a $\nabla$-filtration of $Q_{i^*}$, we see that every composition factor $L_x$ of $Q_{i^*}$ must be a composition factor of some $\nabla\B{y}$ with $y\preceq i$. But for every composition factor $L_x$ of $\nabla\B{y}$ we have $x \sqsubseteq y$. Thus, for every composition factor $L_x$ of $Q_{i^*}$, there is $y$ such that $x\sqsubseteq y$ and $y\preceq i$. Therefore, $x\sqsubseteq i$.
\end{proof}

Let $\B{B, \Phi, \sqsubseteq}$ be an ultra strongly quasihereditary algebra. Suppose $i$ is maximal with respect to $\B{\Phi,\preceq}$. The module $\Delta\B{i}$ is uniserial. Assume $\Delta\B{i}$ has Loewy length $l_i$ and, by analogy with $R$, let $L_{i_1}, \ldots, L_{i_{l_i}}$ be the composition factors of $\Delta\B{i}$, ordered from the top to the socle (so $i_1=i$ and $i_{l_i}=i^*$). We may relabel the simple $B$-modules as $(i,j)$, where, for every maximal $i$ in $\B{\Phi,\preceq}$, the label $i$ is replaced by $(i,1)$, and the remaining labels $i_j$ (as before) are replaced by $(i,j)$. By the definition of the partial order $\B{\Phi,\preceq}$, every simple $B$-module has been given such a label. Furthermore, Proposition \ref{prop:commoncf} assures that this relabelling is well defined. Note that this relabelling is consistent with the labels chosen for the simple $R$-modules. From now onwards we will use this new labelling for the simple $B$-modules. I.e., we shall assume (unless otherwise stated) that $(B,\Phi,\sqsubseteq)$ denotes an ultra strongly quasihereditary algebra and that
\[
\Phi = \{(i,j): 1 \leq i \leq n , 1 \leq j \leq l_i \}
.\]
So $L_{i,j}$, $P_{i,j}$, $Q_{i,j}$, $\Delta\B{i,j}$, $\nabla\B{i,j}$, $T\B{i,j}$ and $T$ will be the naturally expected $B$-modules.

Consider an injective $B$-module of type $Q_{i,l_i}$. By Proposition \ref{prop:commoncf}, $Q_{i,l_i}$ is isomorphic to $T(i,1)$. As we shall see shortly, every $T\B{i,j}$ may be determined recursively from $T\B{i,1}$. The next lemma will be useful when proving this claim.

\begin{lem}
\label{lem:newone}
Let $\B{B, \Phi, \sqsubseteq}$ be an arbitrary quasihereditary algebra. For $i \in \Phi$ consider the short exact sequence
\begin{equation}
\label{eq:sescostandard}
\begin{tikzcd}[ampersand replacement=\&]
0 \arrow{r} \& Y\B{i} \arrow{r} \& T\B{i} \arrow{r}{\psi} \& \nabla\B{i} \arrow{r} \& 0
\end{tikzcd},
\end{equation}
as in \cite[Section 5]{last} (i.e.~with $\psi$ a right minimal $\mathcal{F}\B{\Delta}$-approximation of $\nabla\B{i}$ and with $Y\B{i}$ a module lying in $\mathcal{F}\B{\{\nabla \B{j} : j \sqsubset i\}}$). Then:
\begin{enumerate}
\item $\Rad{\Delta\B{i}}$ is a submodule of $Y\B{i}$;
\item for every morphism $f: T\B{i} \longrightarrow \nabla \B{i}$, there is a map $h$ in the division algebra $\End{B}{\nabla\B{i}}$ such that $f= h \circ \psi$;
\item if $M \subseteq T\B{i}$, with $M$ in $\mathcal{F}\B{\nabla}$ and $T\B{i}/ M$ a costandard module, then $T\B{i}/ M = \nabla\B{i}$ and $M=Y\B{i}$.
\end{enumerate}
\end{lem}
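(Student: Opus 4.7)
The plan is direct for parts (1) and (2), with part (3) requiring a minimal approximation argument. For part (1), I would invoke the standard inclusion $\Delta\B{i} \hookrightarrow T\B{i}$ (part of any $\Delta$-filtration of $T\B{i}$ in which $\Delta\B{i}$ sits at the bottom), and restrict $\psi$ to this submodule. The restriction cannot be zero, for otherwise $\Delta\B{i} \subseteq Y\B{i}$, contradicting the fact that $Y\B{i}$ has composition factors labelled only by $j \sqsubset i$ while $L_i$ is a composition factor of $\Delta\B{i}$. The image of $\psi|_{\Delta\B{i}}$ is a non-zero cyclic quotient of $\Delta\B{i}$, so has simple top $L_i$; as a submodule of $\nabla\B{i}$ its socle lies in $\Soc\nabla\B{i} = L_i$; combined with $[\nabla\B{i} : L_i] = 1$ this forces the image to be exactly $L_i$. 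Hence $\Ker(\psi|_{\Delta\B{i}}) = \Rad\Delta\B{i}$, which is contained in $\Ker\psi = Y\B{i}$.

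For part (2), I would apply Lemma \ref{lem:brauerhumph} with $M = T\B{i}$: the defining identity $(T\B{i}:\Delta\B{i}) = 1$ yields $\dim_{\End{B}{\nabla\B{i}}} \Hom{B}{T\B{i}}{\nabla\B{i}} = 1$. The non-zero element $\psi$ therefore generates the hom space over the division algebra $\End{B}{\nabla\B{i}}$, and every $f \colon T\B{i} \to \nabla\B{i}$ has the required form $h \circ \psi$.

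For part (3), the plan is to recognise the quotient $\pi \colon T\B{i} \to \nabla\B{j}$ as a right $\mathcal{F}\B{\Delta}$-approximation of $\nabla\B{j}$. Applying $\Hom{B}{X}{-}$ (for $X \in \mathcal{F}\B{\Delta}$) to the short exact sequence $0 \to M \to T\B{i} \to \nabla\B{j} \to 0$ and using that $\Ext{B}{1}{X}{M}$ vanishes (because $X \in \mathcal{F}\B{\Delta}$ and $M \in \mathcal{F}\B{\nabla}$) shows that every map $X \to \nabla\B{j}$ lifts through $\pi$. Since the minimal such approximation of $\nabla\B{j}$ is $\psi_j \colon T\B{j} \to \nabla\B{j}$, one constructs mutual factorisations $\pi = \psi_j \circ \alpha$ and $\psi_j = \pi \circ \beta$; then $\psi_j = \psi_j \circ \alpha\beta$, and the right minimality of $\psi_j$ forces $\alpha\beta \in \End{B}{T\B{j}}$ to be an automorphism. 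This exhibits $T\B{j}$ as a direct summand of $T\B{i}$, and the indecomposability of $T\B{i}$ forces $T\B{i} \cong T\B{j}$, whence $j = i$. With $j = i$, part (2) writes $\pi = h \circ \psi$ for some $h \in \End{B}{\nabla\B{i}}$; being non-zero in a division algebra, $h$ is invertible, so $M = \Ker\pi = \Ker\psi = Y\B{i}$.

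The main obstacle is the assembly of part (3): the recognition of $\pi$ as a right $\mathcal{F}\B{\Delta}$-approximation, followed by the minimality-gives-summand argument. Both use standard facts about quasi-hereditary algebras, but their careful combination is what makes part (3) the substantive step of the lemma.
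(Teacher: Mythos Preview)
Your proof is correct and follows essentially the same strategy as the paper. The minor differences are: for part (2), the paper applies $\Hom{B}{-}{\nabla\B{i}}$ to the sequence $0 \to \Delta\B{i} \to T\B{i} \to X\B{i} \to 0$ and uses $\Hom{B}{X\B{i}}{\nabla\B{i}}=0$ together with $\Ext{B}{1}{\mathcal{F}\B{\Delta}}{\mathcal{F}\B{\nabla}}=0$ to obtain the $1$-dimensionality directly, whereas you invoke Lemma~\ref{lem:brauerhumph}, which packages the same computation more cleanly; for part (3), the paper observes that $\pi$ itself is right minimal (any map out of an indecomposable module is right minimal) and then appeals to uniqueness of minimal approximations, while you instead use the minimality of $\psi_j$ to split off $T\B{j}$ as a summand of $T\B{i}$---equivalent standard arguments leading to the same conclusion.
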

\begin{proof}
There is an exact sequence
\begin{equation}
\label{eq:sesstandard}
\begin{tikzcd}[ampersand replacement=\&]
0 \arrow{r} \& \Delta\B{i} \arrow{r}{\phi} \& T\B{i} \arrow{r} \& X\B{i} \arrow{r} \& 0
\end{tikzcd},
\end{equation}
dual to \eqref{eq:sescostandard} (see \cite[Section 5]{last}), where $X\B{i}$ lies $\mathcal{F}\B{\{\Delta \B{j} : j \sqsubset i\}}$. So we may regard $\Delta\B{i}$ as a submodule of $T\B{i}$. The image of $\Delta\B{i}$ under $\psi$ must be the socle of $\nabla\B{i}$, since $L_i$ occurs only once as a composition factor of $T\B{i}$. This proves part (1).

Now apply the functor $\Hom{B}{-}{\nabla\B{i}}$ to \eqref{eq:sesstandard}. We have $\Hom{B}{X\B{i}}{\nabla \B{i}}=0$, as $L_i$ is not a composition factor of $X\B{i}$. Because of this, and also because $\Ext{B}{1}{\mathcal{F}\B{\Delta}}{\mathcal{F}\B{\nabla}}=0$ (see \cite[Theorem $1$]{MR1211481}), we get an isomorphism
\[
\Hom{B}{T \B{i}}{\nabla \B{i}} \longrightarrow \Hom{B}{\Delta\B{i}}{\nabla \B{i}}
\]
of $S$-modules, where $S:=\End{B}{\nabla\B{i}}$ is a division algebra. As $\Hom{B}{\Delta\B{i}}{\nabla \B{i}}$ is 1-dimensional over $S$, part (2) follows.

For part (3), note that the epic $f:T\B{i} \longrightarrow T\B{i}/M$ must be a right $\mathcal{F}\B{\Delta}$-approximation of $T\B{i}/ M$, as $\Ext{B}{1}{\mathcal{F}\B{\Delta}}{M}=0$ (consult \cite[pages 113, 114]{ausrei} for the definition of right approximation). Since $T\B{i}$ is an indecomposable module, the map $f$ is indeed a right minimal $\mathcal{F}\B{\Delta}$-approximation of $T\B{i}/ M$ (see \cite[Proposition $1.1$, (a)]{ausrei}). Suppose $T\B{i}/M = \nabla \B{j}$. So both $f$ and $\psi: T\B{j} \longrightarrow \nabla\B{j}$ are right minimal $\mathcal{F}\B{\Delta}$-approximations of $\nabla\B{j}$. As a consequence, $T\B{j}$ and $T\B{i}$ must be isomorphic (see \cite[page 114]{ausrei}), so $j=i$. If we look at $Y\B{i}$ as a submodule of $T\B{i}$, then part (2) implies that $\iota=\iota' \circ t$, where $t$ is an isomorphism and $\iota:Y\B{i} \longrightarrow T\B{i}$, $\iota ':M \longrightarrow T\B{i}$ are the inclusion maps. Thus $M=Y\B{i}$.
\end{proof}

We are now in position of proving one of our main results.
\begin{thm}
\label{thm:qhstructA}
Let $(B, \Phi, \sqsubseteq)$ be an ultra strongly quasihereditary algebra. Then $Q_{i,l_i}=T\B{i,1}$ and, for every $(i,j) \in \Phi$, we have the following short exact sequence
\begin{equation}
\label{eq:tiltingses}
\begin{tikzcd}[ampersand replacement=\&]
0 \arrow{r} \& T\B{i,j+1} \arrow{r} \& T\B{i,j} \arrow{r}{\psi} \& \nabla\B{i,j} \arrow{r} \& 0
\end{tikzcd},
\end{equation}
where $T\B{i,l_i +1}:=0$. In particular,
\begin{equation}
\label{eq:filtrationT}
0 \subset T\B{i,l_i} \subset \cdots \subset T\B{i,j} \subset \cdots \subset T\B{i,1}=Q_{i,l_i}
\end{equation}
is the unique $\nabla$-filtration of $T\B{i,1}$.
\end{thm}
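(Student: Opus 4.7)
The identity $Q_{i,l_i} = T\B{i,1}$ is immediate from Proposition~\ref{prop:commoncf}(3), applied to the $\preceq$-maximal element $(i,1)$ whose associated $*$-index is $(i,l_i)$. The plan for the filtration is to construct the chain recursively inside $Q_{i,l_i}$: set $T\B{i,1} := Q_{i,l_i}$ and, supposing inductively that $T\B{i,j} \subseteq Q_{i,l_i}$ has been identified as the tilting for $(i,j)$, apply Lemma~\ref{lem:newone} to produce the short exact sequence \eqref{eq:tiltingses} and define $T\B{i,j+1} := Y\B{i,j}$. The core task is then to verify that this $Y\B{i,j}$ really is the tilting corresponding to the label $(i,j+1)$.

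For this verification, first observe that $Y\B{i,j}$ lies in $\Add{T}$: it sits in $\mathcal{F}\B{\nabla}$ via the short exact sequence from Lemma~\ref{lem:newone}, and in $\mathcal{F}\B{\Delta}$ because $\mathcal{F}\B{\Delta}$ is closed under submodules (Theorem~\ref{thm:dlabringelfil}, which applies because (A1) holds). As a submodule of $Q_{i,l_i}$ it inherits the simple socle $L_{i,l_i}$, so it is indecomposable: $Y\B{i,j} \cong T\B{\mu}$ for a single $\mu$. Combining $\Soc{\Delta\B{\mu}} \subseteq \Soc{T\B{\mu}} = L_{i,l_i}$ with the relabeling forces $\mu = (i,l)$ for some $l \in \{1,\ldots,l_i\}$.

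To pin down $l$, I will compute the $\nabla$-multiplicities of $Q_{i,l_i}$ via Brauer-Humphreys reciprocity (Lemma~\ref{lem:brauerhumph}). Any nonzero morphism $\Delta\B{k,m} \to Q_{i,l_i}$ has to embed the simple socle $L_{k,l_k}$ of the uniserial $\Delta\B{k,m}$ into $\Soc{Q_{i,l_i}} = L_{i,l_i}$, forcing $k=i$; a short uniseriality argument (any two nonzero maps agree once their restrictions to the socle agree, and injectivity of $Q_{i,l_i}$ gives surjectivity onto the socle-restriction) then identifies the Hom space with $\End{B}{L_{i,l_i}}$, yielding $(Q_{i,l_i} : \nabla\B{i,m}) = 1$ for each $m \in \{1,\ldots,l_i\}$. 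Inductively $Y\B{i,j}$ has $\nabla$-factors exactly $\nabla\B{i,j+1},\ldots,\nabla\B{i,l_i}$. Because $\sqsubseteq$ refines $\preceq$, these indices form a descending chain $(i,j+1) \sqsupset \cdots \sqsupset (i,l_i)$; combined with the requirement from Lemma~\ref{lem:newone} that the top quotient of $T\B{\mu}$'s $\nabla$-filtration be $\nabla\B{\mu}$ with the remaining labels strictly below $\mu$, the unique compatible value is $\mu = (i,j+1)$. When $j = l_i$ the factor-set is empty, so $Y\B{i,l_i} = 0$ and $T\B{i,l_i} = \nabla\B{i,l_i}$, closing the recursion.

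Uniqueness of the $\nabla$-filtration follows by iterated application of Lemma~\ref{lem:newone}(3): if $M \subseteq Q_{i,l_i}$ lies in $\mathcal{F}\B{\nabla}$ with $Q_{i,l_i}/M$ a costandard module, then $Q_{i,l_i}/M = \nabla\B{i,1}$ and $M = T\B{i,2}$; applying the same principle successively to $T\B{i,2}, T\B{i,3}, \ldots$ forces the entire chain. The main obstacle I anticipate is the multiplicity count $(Q_{i,l_i} : \nabla\B{i,m}) = 1$, where the $\dim_{\End{B}{\Delta\B{i,m}}\op}$-formulation of Lemma~\ref{lem:brauerhumph} must be handled with some care when $K$ is not algebraically closed; once that input is secured, the identification $\mu = (i,j+1)$ and the uniqueness of the filtration follow formally from Lemma~\ref{lem:newone} and the inherited simple-socle property.
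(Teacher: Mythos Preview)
Your overall strategy coincides with the paper's: both start from $Q_{i,l_i}=T(i,1)$ via Proposition~\ref{prop:commoncf}, proceed inductively with the sequence $0\to Y(i,j)\to T(i,j)\to\nabla(i,j)\to 0$ from Lemma~\ref{lem:newone}, observe that $Y(i,j)$ is indecomposable tilting (simple socle inherited from $Q_{i,l_i}$, together with $\mathcal{F}(\Delta)$ being closed under submodules), deduce $Y(i,j)\cong T(i,l)$ for some $l$, and finish uniqueness with Lemma~\ref{lem:newone}(3). The only substantive difference is how you pin down $l=j+1$.

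The paper does this without any multiplicity count: Lemma~\ref{lem:newone}(1) gives $\Delta(i,j+1)=\Rad\Delta(i,j)\subseteq Y(i,j)=T(i,l)$, so $L_{i,j+1}$ is a composition factor of $T(i,l)$ and hence $(i,j+1)\sqsubseteq(i,l)$; on the other hand $l\le j$ is impossible, since then $\Delta(i,l)\subseteq T(i,l)$ would contribute $L_{i,j}$ as a composition factor of $Y(i,j)$, contradicting $Y(i,j)\in\mathcal{F}(\{\nabla(\mu):\mu\sqsubset(i,j)\})$. Together with the total $\preceq$-ordering (refined by $\sqsubseteq$) on the chain $(i,1),\ldots,(i,l_i)$, these two constraints force $l=j+1$.

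Your detour through the global count $(Q_{i,l_i}:\nabla(i,m))=1$ has a genuine gap over non--algebraically closed $K$, which you rightly anticipate but do not close. The socle-restriction isomorphism $\Hom_B(\Delta(i,m),Q_{i,l_i})\cong\End_B(L_{i,l_i})$ is correct as $K$-vector spaces, but Lemma~\ref{lem:brauerhumph} requires the dimension over $\End_B(\Delta(i,m))^{\mathrm{op}}\cong\End_B(L_{i,m})$, and you have not shown that the induced embedding $\End_B(L_{i,m})\hookrightarrow\End_B(L_{i,l_i})$ is surjective. In fact the equality $\dim_K\End_B(L_{i,m})=\dim_K\End_B(L_{i,l_i})$ is a \emph{consequence} of the theorem rather than an available input. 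The cleanest repair is exactly the paper's move: forgo the multiplicity computation and use Lemma~\ref{lem:newone}(1) to see that $L_{i,j+1}$ occurs in $Y(i,j)$; since $L_{i,j+1}$ cannot come from any $\nabla(i,m)$ with $m>j+1$, the factor $\nabla(i,j+1)$ must appear, so $(i,j+1)$ is the $\sqsubseteq$-maximal label among the $\nabla$-factors of $Y(i,j)$, forcing $l=j+1$.
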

\begin{proof}
By Proposition \ref{prop:commoncf}, we must have $Q_{i,l_i}=T\B{i,1}$. We will prove by induction on $k$, that there is a filtration 
\[
T\B{i, k} \subset T\B{i, k-1} \subset \cdots \subset T\B{i,1}=Q_{i,l_i}.
\]
For $k=1$ the claim is obvious. Suppose the claim holds for all $k\leq j$. So assume that $T\B{i,j}\subseteq T\B{i,1}$, and consider the short exact sequence
\[
\begin{tikzcd}[ampersand replacement=\&]
0 \arrow{r} \& Y\B{i,j} \arrow{r} \& T\B{i,j} \arrow{r}{\psi} \& \nabla\B{i,j} \arrow{r} \& 0
\end{tikzcd}
\]
(as in \eqref{eq:sescostandard}). Suppose $j \neq l_i$. Then $\psi$ cannot be an isomorphism, as $\nabla\B{i,j} $ is not in $ \mathcal{F}\B{\Delta}$. Since $Y\B{i,j}\subseteq Q_{i,l_i}$ and $\Soc{Q_{i,l_i}}=L_{i,l_i}$ is simple, we get that $\Soc{Y\B{i,j}}=L_{i,l_i}$. Therefore $Y\B{i,j}$ is indecomposable. As $\mathcal{F}\B{\Delta}$ is closed under submodules (recall Theorem \ref{thm:dlabringelfil}), we must have $Y\B{i,j} \in \mathcal{F}\B{\Delta} \cap \mathcal{F}\B{\nabla}$. Thus $Y\B{i,j}=T\B{i,l}$, for some $1 \leq l \leq l_i$ (note that $\Delta\B{k,l} \subseteq T\B{k,l}$, so $T\B{k,l}$ must have the summand $L_{k,l_k}$ in its socle). From Lemma \ref{lem:newone}, we also know that $\Rad{\Delta\B{i,j}}=\Delta\B{i,j+1}$ is contained in $Y\B{i,j}$. Hence $(i,j+1) \sqsubseteq (i,l)$, so $j+1 \geq l$. We cannot have $l \leq j$, otherwise, as $\Delta\B{i,l}$ is a submodule of $Y\B{i,j}$, $L_{i,j}$ would be a composition factor of $Y\B{i,j}$. Thus $l=j+1$ and $Y\B{i,j}=T\B{i,j+1}$. Note that $Y\B{i,l_i}=0$, otherwise $Y\B{i,l_i}$ would have socle $L_{i,l_i}$. Therefore we get a $\nabla$-filtration as in \eqref{eq:filtrationT}, and part (3) of Lemma \ref{lem:newone} assures its uniqueness.
\end{proof}
\begin{rem}
\label{rem:factor}
Let $1 \leq j < j' \leq l_i$. Then $T\B{i,j'}$ is a submodule of $T\B{i,j}$. We assert that $T\B{i,j}/ T\B{i,j'}$ must be an indecomposable $R$-module. First, note that $T\B{i,j}/ T\B{i,j'}$ belongs to $\mathcal{F}\B{\nabla}$. Indeed, this module must have a unique $\nabla$-filtration as this is the case of $T\B{i,1}$ (look at \eqref{eq:filtrationT}). Since $\mathcal{F}\B{\nabla}$ is closed under direct summands, every module having a unique $\nabla$-filtration must be indecomposable. 
\end{rem}

Given a set of modules (or a single module) $\Theta$ and a module $M$ in $\Mod{A}$, define the \emph{reject of $\Theta$ in $M$}, $\Rej{M}{\Theta}$, to be the submodule $N$ of $M$ such that $M/N$ is the largest factor module of $M$ cogenerated by $\Theta$ (see \cite[§8]{MR1245487}). From the filtration \eqref{eq:filtrationT} and by the properties of $\nabla$-filtrations it is not difficult to conclude that
\[
T\B{i,j}=\Rej{Q_{i,l_i}}{\bigoplus_{(k,l):(k,l)\sqsupset (i,j)}Q_{k,l}}=\Rej{Q_{i,l_i}}{\bigoplus_{(k,l):(k,l)\not\sqsubseteq (i,j)}Q_{k,l}}
\]
Therefore, we have the following result.
\begin{lem}
\label{lem:rejfiltration}
Let $(B, \Phi, \sqsubseteq)$ be an ultra strongly quasihereditary algebra. The module $T\B{i,j}$, $(i,j) \in \Phi$, is the largest submodule of $Q_{i,l_i}$ whose all composition factors are of the form $L_{k,l}$, $(k,l)\sqsubseteq (i,j)$.
\end{lem}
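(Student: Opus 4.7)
The plan is to establish both directions of the ``largest submodule'' claim using the $\nabla$-filtration \eqref{eq:filtrationT} together with a socle analysis of $\nabla$-filtered modules.

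First, I would check that every composition factor of $T\B{i,j}$ has the form $L_{k,l}$ with $(k,l) \sqsubseteq (i,j)$. By Theorem \ref{thm:qhstructA}, $T\B{i,j}$ is an iterated extension of the costandards $\nabla\B{i,k}$ for $j \leq k \leq l_i$, and general quasihereditary theory gives that $\nabla\B{i,k}$ has only composition factors $L_m$ with $m \sqsubseteq (i,k)$. By Lemma \ref{lem:littlelemma} together with the relabelling following Proposition \ref{prop:commoncf}, $\Delta\B{i,k}$ has composition factors $L_{i,k}, L_{i,k+1}, \ldots, L_{i,l_i}$, so for $k \geq j$ we get $(i,k) \preceq (i,j)$ and hence $(i,k) \sqsubseteq (i,j)$. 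Chaining, every composition factor of $T\B{i,j}$ has the required form.

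For maximality, I would take an arbitrary submodule $N \subseteq Q_{i,l_i}$ whose composition factors are all of the prescribed form and prove $N \subseteq T\B{i,j}$ by showing that $N' := (N + T\B{i,j})/T\B{i,j}$ vanishes. As a quotient of $N$, $N'$ still has only ``good'' composition factors. On the other hand $Q_{i,l_i}/T\B{i,j}$ inherits from \eqref{eq:filtrationT} a $\nabla$-filtration with layers $\nabla\B{i,1}, \ldots, \nabla\B{i,j-1}$, and a short induction on the length of a $\nabla$-filtration (using $\Soc{\nabla\B{k}} = L_k$ and that any simple submodule of a $\nabla$-filtered module either lies in the kernel of the projection onto the top layer or injects into the socle of that top layer) shows that $\Soc{Q_{i,l_i}/T\B{i,j}}$ is a direct sum of simples $L_{i,k}$ with $k < j$.

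The closing observation is that $(i,k) \not\sqsubseteq (i,j)$ whenever $k < j$: since $L_{i,j}$ appears in $\Delta\B{i,k}$, we have $(i,j) \preceq (i,k)$ and hence $(i,j) \sqsubseteq (i,k)$, so antisymmetry of $\sqsubseteq$ rules out the reverse. Hence every simple summand of $\Soc{Q_{i,l_i}/T\B{i,j}}$ has a ``bad'' label, so if $N'$ were nonzero its socle would contribute a bad composition factor, contradicting the hypothesis on $N$. I expect the main technical step to be the socle lemma for $\nabla$-filtered modules; the remainder is bookkeeping with the order $\sqsubseteq$ and the composition factors of the standard modules.
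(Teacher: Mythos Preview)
Your proof is correct and follows essentially the same approach as the paper: both rely on the $\nabla$-filtration \eqref{eq:filtrationT} of $Q_{i,l_i}$ and the fact that the socle of a $\nabla$-filtered module can only contain the socles of its $\nabla$-factors. The paper packages this via the reject identity $T\B{i,j}=\Rej{Q_{i,l_i}}{\bigoplus_{(k,l)\not\sqsubseteq (i,j)}Q_{k,l}}$ (which is equivalent to the ``largest submodule'' statement by injectivity of the $Q_{k,l}$), whereas you verify both inclusions directly; the underlying argument is the same.
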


We now claim that $Q_{i,j}/ \nabla\B{i,j}$ is isomorphic to $Q_{i,j-1}$ for $1 < j \leq l_i$, and that $Q_{i,1}\cong \nabla\B{i,1}$.
\begin{prop}
\label{prop:tiltinginj}
Let $(B, \Phi, \sqsubseteq )$ be an ultra strongly quasihereditary algebra. For every $(i,j) \in \Phi$, we have the short exact sequences
\begin{equation}
\label{eq:sesinjultrastrong}
\begin{tikzcd}[ampersand replacement=\&]
0 \arrow{r} \& \nabla\B{i,j} \arrow{r} \& Q_{i,j} \arrow{r} \& Q_{i,j-1} \arrow{r} \& 0,
\end{tikzcd}
\end{equation}
\begin{equation}
\label{eq:sesinjultrastrong2}
\begin{tikzcd}[ampersand replacement=\&]
0 \arrow{r} \& T\B{i,j+1} \arrow{r} \& T\B{i,1} \arrow{r} \& Q_{i,j} \arrow{r} \& 0,
\end{tikzcd}
\end{equation}
where $Q_{i,0}:=0$. Moreover, the module $Q_{i,j}$ has a unique $\nabla$-filtration.
\end{prop}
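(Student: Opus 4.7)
Set $M_j := T\B{i,1}/T\B{i,j+1}$, with the convention $T\B{i,l_i+1}:=0$. Using $T\B{i,1}=Q_{i,l_i}$ from Theorem~\ref{thm:qhstructA}, the definition of $M_j$ is exactly the required sequence \eqref{eq:sesinjultrastrong2}, and the filtration~\eqref{eq:filtrationT} furnishes the short exact sequence $0\to \nabla\B{i,j}\to M_j\to M_{j-1}\to 0$, which will coincide with \eqref{eq:sesinjultrastrong} once we know $M_j\cong Q_{i,j}$ for all $j$. Uniqueness of the $\nabla$-filtration of $M_j$ is then immediate from Remark~\ref{rem:factor}. The entire proposition thus reduces to the claim $M_j\cong Q_{i,j}$.

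The key observation is that \eqref{eq:sesinjultrastrong2} is a minimal injective resolution of the tilting module $T\B{i,j+1}$. Since $T\B{i,j+1}$ is a submodule of $T\B{i,1}=Q_{i,l_i}$ sharing its simple socle $L_{i,l_i}$, the inclusion is essential and $Q_{i,l_i}$ is the injective envelope of $T\B{i,j+1}$. Moreover, $T\B{i,j+1}\in\mathcal{F}\B{\nabla}$ because it is tilting, and Theorem~\ref{thm:dlabringelfil}(4)---applicable since $B$ is ultra strongly quasihereditary---guarantees that the injective dimension of $T\B{i,j+1}$ is at most one.

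Applying $\Hom{B}{X}{-}$ to \eqref{eq:sesinjultrastrong2} and using $\Ext{B}{q}{X}{Q_{i,l_i}}=0$ for $q\geq 1$ (by injectivity of $Q_{i,l_i}$) together with $\Ext{B}{2}{X}{T\B{i,j+1}}=0$ (by the injective dimension bound), the long exact sequence yields $\Ext{B}{1}{X}{M_j}=0$ for every $B$-module $X$. Hence $M_j$ is injective. By Remark~\ref{rem:factor}, $M_j$ is indecomposable, so its socle is simple; since \eqref{eq:filtrationT} realises $\nabla\B{i,j}$ as a submodule of $M_j$, we have $L_{i,j}=\Soc\nabla\B{i,j}\subseteq\Soc M_j$, forcing $\Soc M_j=L_{i,j}$ and therefore $M_j\cong Q_{i,j}$.

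The main obstacle is conceptual rather than computational: one must recognise \eqref{eq:sesinjultrastrong2} as (essentially) the minimal injective resolution of the tilting module $T\B{i,j+1}$. Once this is spotted, the ultra strongly quasihereditary hypothesis---via Theorem~\ref{thm:dlabringelfil}---supplies exactly the vanishing of $\operatorname{Ext}^{2}$ needed to upgrade $M_j$ from a mere quotient of an injective to an injective module.
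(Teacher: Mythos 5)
Your proof is correct and follows essentially the same route as the paper: reduce to showing $T\B{i,1}/T\B{i,j+1}$ is injective, invoke Theorem \ref{thm:dlabringelfil} for the injective dimension bound on the tilting module $T\B{i,j+1}$, use indecomposability via Remark \ref{rem:factor}, and pin down the socle via the $\nabla$-filtration. The only cosmetic difference is that you unfold the implication ``injective dimension $\leq 1$ and $T\B{i,j+1}\subseteq Q_{i,l_i}$ injective $\Rightarrow$ quotient injective'' through an explicit long exact sequence, where the paper states it directly; both are fine.
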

\begin{proof}
By Theorem \ref{thm:qhstructA}, we have the exact sequences
\begin{equation}
\label{eq:sesinj2}
\begin{tikzcd}[ampersand replacement=\&, column sep=small]
 0 \arrow{r} \& T\B{i,j}/T\B{i,j+1} \arrow{r} \& Q_{i,l_i}/T\B{i,j+1} \arrow{r} \& Q_{i,l_i}/T\B{i,j} \arrow{r} \& 0
\end{tikzcd},
\end{equation}
where $T\B{i,l_i+1}=0$ and $T\B{i,j}/T\B{i,j+1} \cong \nabla\B{i,j}$, $1 \leq j \leq l_i$. By Theorem \ref{thm:dlabringelfil}, the modules $T\B{i,j}$, $1 \leq j \leq l_i$, have injective dimension at most one. As $Q_{i,l_i}$ is the injective hull of $T\B{i,j}$, we get that all $Q_{i,l_i}/T\B{i,j}$ are injective. The modules $Q_{i,l_i}/T\B{i,j+1}$ have a unique $\nabla$-filtration by Theorem \ref{thm:qhstructA}, so they are indecomposable (see Remark \ref{rem:factor}). Therefore $Q_{i,l_i}/T\B{i,j+1}$ is the injective hull of $\nabla\B{i,j}$ for every $1 \leq j \leq l_i$, which shows that $Q_{i,l_i}/T\B{i,j+1} = T\B{i,1}/T\B{i,j+1}$ is isomorphic to $Q_{i,j}$. This produces the short exact sequence \eqref{eq:sesinjultrastrong2} in the statement of this proposition. Now \eqref{eq:sesinj2} gives the exact sequence \eqref{eq:sesinjultrastrong}.
\end{proof}

\section{The Ringel dual}
\label{sec:rdual}
In this section we start by summarising the general setup for the Ringel dual of a quasihereditary algebra. Then, we study the Ringel dual $\mathcal{R}\B{B}$ of an ultra strongly quasihereditary algebra $B$. The main goal of this section is to show that $\mathcal{R}\B{B}\op$ is also ultra strongly quasihereditary.

For now suppose that $(B, \Phi, \sqsubseteq )$ is an arbitrary quasihereditary algebra. Denote by $L_i$, $Q_i$, $\nabla\B{i}$, $T\B{i}$ and $T$, respectively, the simple $B$-modules, the injective indecomposables, etc., as naturally expected. The algebra $\END{B}{T}\op$ is quasihereditary with respect to the poset $(\Phi , \sqsubseteq \op )$. This endomorphism algebra, investigated by Ringel in \cite{last}, is known as the \emph{Ringel dual} of $B$, and we shall denote it by $\mathcal{R}\B{B}$. It was shown in \cite{last} that $\mathcal{R}\B{\mathcal{R}\B{B}}\cong B$, for $B$ basic. 

Denote by $P_i'$ the projective indecomposable $\mathcal{R}\B{B}$-module $\Hom{B}{T}{T\B{i}}$ and let $L_i'$ be its top. Denote the standard, the costandard and the summands of the characteristic $\mathcal{R}\B{B}$-module $T'$ accordingly (with the prime symbol).

The restriction of the functor
\[
\Hom{B}{T}{-}: \Mod{B} \longrightarrow \Mod{\mathcal{R}\B{B}}
\]
to $\mathcal{F}\B{\nabla}$ yields an equivalence between the categories $\mathcal{F}\B{\nabla}$ and $\mathcal{F}\B{\Delta '}$. 

Since $\Ext{B}{1}{T}{\mathcal{F}\B{\nabla}}$ vanishes, then $\Hom{B}{T}{-}$ maps short exact sequences in $\Mod{B}$ with modules in $\mathcal{F}\B{\nabla}$ to short exact sequences in $\Mod{\mathcal{R}\B{B}}$ with modules in $\mathcal{F}\B{\Delta '}$.

The following holds
\begin{equation*}
\begin{array}{l}
 \Hom{B}{T}{T\B{i}}=P_i', \\
 \Hom{B}{T}{\nabla\B{i}}=\Delta '\B{i}, \\
 \Hom{B}{T}{Q_i}=T'\B{i}.
\end{array}
\end{equation*}

\subsection{Ringel dual of an ultra strongly quasihereditary algebra}
Now we assume that $(B, \Phi, \sqsubseteq )$ is an ultra strongly quasihereditary algebra and label the simple $B$-modules by $(i,j)$, as described in Section \ref{sec:costddinj}. We want to show that $\mathcal{R}\B{B}\op$ is ultra strongly quasihereditary.

Let $D$ be the standard duality. Then the standard modules over $\mathcal{R}\B{B}\op$ are the modules $D(\nabla ' \B{i,j})$, and the indecomposable injectives are the modules $D(P_{i,j}')$. To verify that (A1) and (A2) hold for $\mathcal{R}\B{B}\op$, we need that
\begin{enumerate}
\item[(A1*)] $\nabla ' \B{i,j}/ L_{i,j}'$ is either a costandard module, or is zero;
\item[(A2*)] if $\nabla ' \B{i,j}$ is simple, then $P_{i,j}'$ has a $\nabla '$-filtration (that is, it is a tilting module).
\end{enumerate}

From the quasihereditary struture of $B$ we can immediately deduce some properties of $\mathcal{R}\B{B}$.
\begin{enumerate}[(I)]
\item We have that $P_{i,1}'\cong T'\B{i,l_i}$ since $T\B{i,1}$ is isomorphic to $Q_{i,l_i}$.
\item By applying the functor $\Hom{B}{T}{-}$ to the exact sequence \eqref{eq:tiltingses} in the statement of Theorem \ref{thm:qhstructA}, we get
\[
\begin{tikzcd}[ampersand replacement=\&]
0 \arrow{r} \& P'_{i,j+1} \arrow{r} \& P'_{i,j} \arrow{r} \& \Delta'\B{i,j} \arrow{r} \& 0
\end{tikzcd},
\]
where $P'_{i,l_i+1}:=0$. In particular, the standard $\mathcal{R}\B{B}$-modules have projective dimension at most one. By \cite[Lemma $4.1$]{MR1211481}, this is equivalent to the fact that $\mathcal{F}\B{\nabla '}$ is closed under factor modules.
\item Using the functor $\Hom{B}{T}{-}$ we get from the filtration \eqref{eq:filtrationT} that the module $P_{i,1}'\cong T'\B{i,l_i}$ has a unique $\Delta ' $-filtration, given by
\[
0 \subset P'_{i,l_i} \subset \cdots \subset P'_{i,j} \subset \cdots \subset P'_{i,1}=T'\B{i,l_i}.
\]
The quotients are as described in (II).
\end{enumerate}

\begin{thm}\label{thm:lastonebynow}
Using the notation introduced previously, we have:
\begin{enumerate}
\item $P'_{i,1} \cong T'\B{i,l_i}$;
\item if $1 \leq j < l_i$, then $T'\B{i,j}\cong P'_{i,1}/ P'_{i,j+1}$;
\item for $(i,j) \in \Phi$, the costandard module $\nabla ' \B{i,j}$ has Loewy length $j$, is uniserial, and satisfies 
\[
\nabla ' \B{i,j-1} \cong \nabla ' \B{i,j} / L_{i,j}'.
\]
\end{enumerate}
\end{thm}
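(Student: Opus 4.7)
I organize the proof by the three parts, each building on the previous.

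Part (1) is immediate: by Theorem \ref{thm:qhstructA}, $Q_{i, l_i} = T\B{i, 1}$, and the Ringel dual dictionary gives $\Hom{B}{T}{T\B{i, 1}} = P'_{i, 1}$ and $\Hom{B}{T}{Q_{i, l_i}} = T'\B{i, l_i}$. Part (2) comes from applying $\Hom{B}{T}{-}$ to the short exact sequence \eqref{eq:sesinjultrastrong2}. All three terms lie in $\mathcal{F}\B{\nabla}$, so $\Ext{B}{1}{T}{T\B{i, j+1}} = 0$ and the resulting sequence $0 \to P'_{i, j+1} \to P'_{i, 1} \to T'\B{i, j} \to 0$ is exact, which gives $T'\B{i, j} \cong P'_{i, 1}/P'_{i, j+1}$.

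For part (3), Brauer--Humphreys reciprocity in $\mathcal{R}\B{B}$ applied to the $\Delta'$-filtration of $P'_{i, j}$ from observation (II) gives
\[
[\nabla'\B{k, l} : L'_{i, j}] = \B{P'_{i, j} : \Delta'\B{k, l}},
\]
which equals $1$ exactly when $k = i$ and $l \geq j$, and $0$ otherwise. Hence $\nabla'\B{i, j}$ has composition factors $L'_{i, 1}, L'_{i, 2}, \ldots, L'_{i, j}$, each with multiplicity one, so its composition length is $j$. Moreover $\nabla'\B{i, j}$ is a quotient of the tilting module $T'\B{i, j}$, which by part (2) is itself a quotient of $P'_{i, 1}$; hence it inherits the simple top $L'_{i, 1}$.

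I then prove the uniseriality and the recursive isomorphism $\nabla'\B{i, j}/L'_{i, j} \cong \nabla'\B{i, j-1}$ together, by induction on $j$, the base case $j = 1$ being the simplicity of $\nabla'\B{i, 1}$. Applying $\Hom{B}{T}{-}$ to \eqref{eq:sesinjultrastrong} produces $0 \to \Delta'\B{i, j} \to T'\B{i, j} \to T'\B{i, j-1} \to 0$, and composing the right map with the tilting surjection $T'\B{i, j-1} \twoheadrightarrow \nabla'\B{i, j-1}$ gives a surjection $\varphi\colon T'\B{i, j} \twoheadrightarrow \nabla'\B{i, j-1}$. The crux is to show that $\varphi$ factors through the tilting surjection $T'\B{i, j} \twoheadrightarrow \nabla'\B{i, j}$; granting this, a length count based on the composition-factor information above forces the induced surjection $\nabla'\B{i, j} \twoheadrightarrow \nabla'\B{i, j-1}$ to have a simple kernel, which must coincide with $\Soc{\nabla'\B{i, j}} = L'_{i, j}$. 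The inductive hypothesis then identifies $\nabla'\B{i, j}$ as uniserial of Loewy length $j$ with the claimed composition series.

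The main obstacle is verifying this factorisation, equivalently that $Y'\B{i, j} := \Ker{\B{T'\B{i, j} \twoheadrightarrow \nabla'\B{i, j}}}$ is annihilated by $\varphi$. I would attack this by unwinding the $\nabla'$-filtration of $Y'\B{i, j}$ (whose factors $\nabla'\B{k, l}$ satisfy $(k, l) \sqsupset (i, j)$ by the construction of \eqref{eq:sescostandard}) and using Lemma \ref{lem:newone}(2) together with the composition-factor information already obtained to show that all contributing homomorphisms to $\nabla'\B{i, j-1}$ vanish.
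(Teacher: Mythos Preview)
Your arguments for parts (1) and (2) coincide with the paper's. For part (3), your observation that $\nabla'\B{i,j}$ is a quotient of $T'\B{i,j}$ and hence of $P'_{i,1}$, so has simple top $L'_{i,1}$, is a nice shortcut. However, two genuine problems remain.

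First, the reciprocity $[\nabla'\B{k,l}:L'_{i,j}]=(P'_{i,j}:\Delta'\B{k,l})$ that you invoke is only available when $K$ is algebraically closed (see the remark preceding Lemma~\ref{lem:brauerhumph}). Over an arbitrary field, Lemma~\ref{lem:brauerhumph} only yields the identity \eqref{eq:multline}, which tells you which $L'_{k,l}$ occur in $\nabla'\B{i,j}$ but not that each occurs with multiplicity one. The paper therefore argues separately that $[\nabla'\B{i,j}:L'_{i,1}]=1$ via a trace argument, and this single multiplicity is all that is needed.

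Second, and more seriously, the factorisation of $\varphi$ through $T'\B{i,j}\twoheadrightarrow\nabla'\B{i,j}$ is not established, and your sketch does not go through. You propose to show that $\Hom{\mathcal{R}\B{B}}{Y'\B{i,j}}{\nabla'\B{i,j-1}}=0$ by examining the $\nabla'$-factors $\nabla'\B{k,l}$ of $Y'\B{i,j}$, which satisfy $(k,l)\sqsupset (i,j)$ in the $B$-order. But $(i,j-1)\sqsupset (i,j)$ always holds (since $(i,j)\preceq (i,j-1)$ and $\sqsubseteq$ refines $\preceq$), so $\nabla'\B{i,j-1}$ is itself an admissible factor of $Y'\B{i,j}$, and $\Hom{}{\nabla'\B{i,j-1}}{\nabla'\B{i,j-1}}\neq 0$. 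Lemma~\ref{lem:newone}(2) does not help here: it concerns maps $T\B{i}\to\nabla\B{i}$ with matching index, not maps from $Y'\B{i,j}$ to a different costandard.

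The paper avoids this obstacle entirely by exploiting observation (II): since every $\Delta'\B{i,j}$ has projective dimension at most one, the dual of Theorem~\ref{thm:dlabringelfil} gives that $\mathcal{F}\B{\nabla'}$ is closed under factor modules. Hence $\nabla'\B{i,j}/L'_{i,j}$ lies in $\mathcal{F}\B{\nabla'}$ automatically, with no need to produce an explicit surjection onto $\nabla'\B{i,j-1}$. One then checks that its composition factors force its unique $\nabla'$-filtration factor to be $\nabla'\B{i,j-1}$. You should replace your inductive construction of $\varphi$ by this closure property.
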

\begin{proof}
Part (1) is answered in (I) above. Part (2) follows by applying the functor $\Hom{B}{T}{-}$ to \eqref{eq:sesinjultrastrong2} in Proposition \ref{prop:tiltinginj}.

To prove part (3) apply Lemma \ref{lem:brauerhumph} to (III). This yields
\begin{multline}
\label{eq:multline}
\dim_{\End{\mathcal{R}\B{B}}{\nabla'\B{i,j}}}{\Hom{\mathcal{R}\B{B}}{P'_{k,l}}{\nabla'\B{i,j}}} \\
=(P'_{k,l}:\Delta '\B{i,j})=
\begin{cases}
1 & \text{if }k=i\text{ and }l\leq j, \\
0 & \text{otherwise}.
\end{cases}
\end{multline}
As a consequence, the composition factors of $\nabla ' \B{i,j}$ are $L'_{i,1}, \ldots, L'_{i,j}$, with $L'_{i,j}$ having multiplicity one in $\nabla ' \B{i,j}$. In particular, $\nabla'\B{i,1}\cong L'_{i,1}$. We prove that $\nabla ' \B{i,j}/L_{i,j}' \cong \nabla '\B{i,j-1}$ for $1 < j \leq l_i$. Let $L$ be a direct summand of $\Top{\nabla '\B{i,j}}$. Since $\mathcal{F}\B{\nabla ' }$ is closed under taking quotients, then $L$ must be a costandard module. By \eqref{eq:multline}, we must have $L \cong \nabla ' \B{i,1} \cong L'_{i,1}$. Thus, there is an exact sequence
\[
\begin{tikzcd}[ampersand replacement=\&]
0 \arrow{r} \& \Ker{\pi} \arrow{r}{\iota} \& \nabla'\B{i,j} \arrow{r}{\pi} \& \nabla'\B{i,1} \arrow{r} \& 0
\end{tikzcd}.
\]
We claim that $[\nabla ' \B{i,j}: L'_{i,1}] =1$. For this, let $M$ be a submodule of $\Ker{\pi}$ generated by $P'_{i,1}=T'\B{i,l_i}$. Since $\mathcal{F}\B{\nabla '}$ is closed under quotients, it follows that $M \in \mathcal{F}\B{\nabla '}$, but also $\nabla'\B{i,j}/ M \in \mathcal{F}\B{\nabla '}$, i.e.~there is an exact sequence
\[
\begin{tikzcd}[ampersand replacement=\&]
0 \arrow{r} \& M \arrow{r} \& \nabla'\B{i,j} \arrow{r} \& \nabla'\B{i,j}/ M \arrow{r} \& 0
\end{tikzcd}.
\]
This is only possible if $M=0$ or $M=\nabla ' \B{i,j}$. Since $\iota$ is a proper inclusion, then $M=0$. This proves that $[\Ker{\pi}: L' _{i,1}]=0$. Thus $[\nabla ' \B{i,j}: L'_{i,1}] =1$ for all $(i,j) \in \Phi$. Consider now the module $N:=\nabla '\B{i,j}/L_{i,j}'$, which lies in $\mathcal{F}\B{\nabla'}$ as this category is closed under quotients. By what we have seen previously, $N$ has composition factors $L'_{i,1}, \ldots, L'_{i,j-1}$, with $L'_{i,1}$ having multiplicity one in $N$. The only possibility is that $N$ is isomorphic to $\nabla ' \B{i,j-1}$, that is $\nabla '\B{i,j}/L_{i,j}' \cong \nabla '\B{i,j-1}$.
\end{proof}
\begin{rem}
The proof of part (3) in Theorem \ref{thm:lastonebynow} can be simplified if the underlying field $K$ is algebraically closed.
\end{rem}
\begin{cor}
If $\B{B, \Phi, \sqsubseteq}$ is an ultra strongly quasihereditary algebra, then the algebra $\B{\mathcal{R}\B{B}\op, \Phi, \sqsubseteq \op}$ is also ultra strongly quasihereditary.
\end{cor}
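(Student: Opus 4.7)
The plan is to unwind the duality and reduce everything to Theorem \ref{thm:lastonebynow}. Applying the standard $K$-duality $D$ interchanges projectives with injectives, standards with costandards, and reverses the partial order. Consequently, verifying axioms (A1) and (A2) for $\B{\mathcal{R}\B{B}\op, \Phi, \sqsubseteq\op}$ is equivalent to verifying the two dual conditions (A1*) and (A2*) already stated for $\mathcal{R}\B{B}$ right before Theorem \ref{thm:lastonebynow}: that $\nabla' \B{i,j}/L'_{i,j}$ lies in $\nabla \cup \{0\}$, and that $P'_{i,j}$ is tilting whenever $\nabla'\B{i,j}$ is simple.

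Condition (A1*) is immediate from part (3) of Theorem \ref{thm:lastonebynow}. Indeed, for $1 < j \leq l_i$ the theorem yields $\nabla'\B{i,j}/L'_{i,j} \cong \nabla'\B{i,j-1}$, which is a costandard $\mathcal{R}\B{B}$-module, while for $j=1$ the same theorem gives $\nabla'\B{i,1} \cong L'_{i,1}$, so $\nabla'\B{i,1}/L'_{i,1} = 0$.

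For condition (A2*), note that part (3) of Theorem \ref{thm:lastonebynow} also identifies exactly when a costandard module is simple: since $\nabla'\B{i,j}$ has Loewy length $j$, it is simple precisely when $j=1$. But then part (1) of Theorem \ref{thm:lastonebynow} gives $P'_{i,1} \cong T'\B{i,l_i}$, which is by definition a tilting $\mathcal{R}\B{B}$-module and hence lies in $\mathcal{F}\B{\nabla'}$. This establishes (A2*).

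I do not foresee a real obstacle here: all the necessary structural information about the standard, costandard, projective and tilting $\mathcal{R}\B{B}$-modules has been collected in observations (I)--(III) and in Theorem \ref{thm:lastonebynow}, so the corollary is essentially a bookkeeping step translating those facts through the duality $D$. The only point that deserves a brief mention in the write-up is the explicit correspondence between (A1), (A2) for $\mathcal{R}\B{B}\op$ and (A1*), (A2*) for $\mathcal{R}\B{B}$, which is exactly how the latter two conditions were motivated in the preamble to Theorem \ref{thm:lastonebynow}.
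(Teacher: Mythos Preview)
Your proposal is correct and follows exactly the same approach as the paper: the paper's proof is a one-line appeal to Theorem \ref{thm:lastonebynow}, noting that it establishes (A1*) and (A2*), and you have simply spelled out the details of that appeal. Your write-up is a faithful expansion of what the paper leaves implicit.
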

\begin{proof}
By Theorem \ref{thm:lastonebynow}, it is clear that the quasihereditary algebra $\B{\mathcal{R}\B{B}, \Phi, \sqsubseteq \op}$ satisfies axioms (A1*) and (A2*).
\end{proof}

\section{The ADR algebra of a certain Brauer tree algebra}
\label{sec:last}
Brauer tree algebras are a class of algebras of finite representation type. They include all blocks of group algebras of finite type, and also all blocks of type A Hecke algebras of finite type (\cite{jost}). In this section we determine the quiver presentation of the ADR algebra $R_A$ of $A$, when $A$ is the Brauer tree algebra $KQ/I$, with $K$ an arbitrary field, $Q$ the quiver
\[
\begin{tikzcd}[ampersand replacement=\&]
\overset{1}{\circ} \arrow[bend left]{r}{\alpha_1}\& \overset{2}{\circ}\arrow[bend left]{l}{\beta_1} \arrow[bend left]{r}{\alpha_2}\&  \cdots\arrow[bend left]{l}{\beta_2} \arrow[bend left]{r}{\alpha_{n-2}}  \& \overset{n-1}{\circ} \arrow[bend left]{l}{\beta_{n-2}} \arrow[bend left]{r}{\alpha_{n-1}} \&
\overset{n}{\circ}\arrow[bend left]{l}{\beta_{n-1}}
\end{tikzcd}
\]
and $I$ the admissible ideal of $KQ$ generated by the relations
\[
\alpha_{i+1}\alpha_i,\,\, \beta_i \beta_{i+1},\,\, \alpha_i \beta_{i} - \beta_{i+1}\alpha_{i+1}, \,\, i=1 , \ldots, n-2.
\]

The Brauer tree algebra $A$ plays an important role in the representation theory of the symmetric group. Indeed, let $\Sigma_m$ be the symmetric group on $m$ letters. If $K$ is a field of prime characteristic $p$, then any non-simple block of $K \Sigma_m$ of finite type is Morita equivalent to the principal block of $K \Sigma_p$. Consider the algebra $A$ defined above, with $K$ a field of prime characteristic $p$ and with $n=p-1$. In this case $A$ is a basic algebra of the principal block of $K \Sigma_p$. Moreover, the vertex $i$ in the quiver of $A$ may be thought as corresponding to the simple $K \Sigma_p$-module labelled by the (hook) partition $(p+1-i,1^{i-1})$ of $p$. We refer to \cite{Scopes} for further details.

Since $I$ is generated by monomial relations and by commutative relations between paths of the same length, the projective indecomposable $A$-modules may be represented by graphs in the following way
\[
\begin{tikzcd}[ampersand replacement=\&, row sep =tiny, column sep = tiny]
1 \arrow[dash]{d} \\
2 \arrow[dash]{d} \\
1
\end{tikzcd},\,\,
\begin{tikzcd}[ampersand replacement=\&, row sep =tiny, column sep = tiny]
n \arrow[dash]{d} \\
n-1 \arrow[dash]{d} \\
n
\end{tikzcd}, \,\,
\begin{tikzcd}[ampersand replacement=\&, row sep =tiny, column sep = tiny]
\& i \arrow[dash]{dr} \arrow[dash]{dl} \& \\
i-1 \arrow[dash]{dr}\& \& i+1 \arrow[dash]{dl} \\
\& i  \&
\end{tikzcd}, \,\, i=2, \ldots, n-1.
\]
Denote the projective $A$-module corresponding to vertex $i$ by $P_i$.

By Section \ref{sec:sect2}, the $R_A$-modules $P_{i,1}=\Delta\B{i,1}$ are uniserial, with Loewy length 3, and with composition factors $L_{i,1}$, $L_{i,2}$, and $L_{i,3}$, ordered from top to socle. Furthermore, these projectives determine all the standard $R_A$-modules. Consider now (for $2 \leq i \leq n-1$) the short exact sequence
\[
\begin{tikzcd}[ampersand replacement=\&]
0 \arrow{r} \& L_{i+1} \oplus L_{i-1} \arrow{r} \& P_i/ \RAD{2}{P_i} \arrow{r}{\pi} \& L_i \arrow{r} \& 0 
\end{tikzcd},
\]
and apply $\Hom{A}{G}{-}$ to it. We get the exact sequence
\[
\begin{tikzcd}[ampersand replacement=\&]
0 \arrow{r} \& \Delta\B{i+1,1} \oplus \Delta\B{i-1,1} \arrow{r} \& P_{i,2} \arrow{r}{{\pi}_*} \& \Delta\B{i,1}
\end{tikzcd},
\]
and as ${\pi}_*\neq 0$, we must have $\Ima{\pi_*}=\Delta\B{i,2}$, since $\Delta\B{i,2}$ is the unique submodule of $\Delta\B{i,1}$ whose top is $L_{i,2}$. Note that this is exactly what Propositions \ref{prop:smalo} and \ref{prop:standard} are telling us. Similarly, we get
\[
\begin{tikzcd}[ampersand replacement=\&]
0 \arrow{r} \& \Hom{A}{G}{\Rad{P_i}} \arrow{r} \& P_{i,3} \arrow{r} \& \Delta\B{i,3} \arrow{r} \& 0
\end{tikzcd},
\]
and as $\Delta\B{i,3}=L_{i,3}$, it follows that $\Hom{A}{G}{\Rad{P_i}}=\Rad{P_{i,3}}$.

We wish to obtain a quiver presentation $KQ'/I'$ for $R_A$. As before, denote by $(i,j)$ the vertex of $Q'$ corresponding to the simple $R_A$-module $L_{i,j}$.
\begin{prop}
The algebra $R_A$ is isomorphic to $KQ'/I'$, with $Q'$ the quiver
\[
\begin{tikzpicture}[baseline= (a).base]
\node[scale=.71] (a) at (0,0){
\begin{tikzcd}[ampersand replacement=\&, column sep =32mm]
\overset{(1,1)}{\circ} \arrow{d}{t_{1}^{(2)}} \& \overset{(2,1)}{\circ} \arrow{d}{t_{2}^{(2)}} \& \cdots \& \overset{(n-1,1)}{\circ} \arrow{d}{t_{n-1}^{(2)}} \& \overset{(n,1)}{\circ} \arrow{d}{t_{n}^{(2)}} \\
\overset{(1,2)}{\circ} \arrow{d}{t_{1}^{(3)}} \arrow{ur}[description, near end]{{\alpha_1}^{(1)}} \& \overset{(2,2)}{\circ} \arrow{d}{t_{2}^{(3)}} \arrow{ur}[description, near end]{{\alpha_2}^{(1)}} \arrow{ul}[description, near start]{{\beta_1}^{(1)}} \&  \cdots \arrow{ul}[description, near start]{{\beta_2}^{(1)}} \arrow{ur}[description, near end]{{\alpha_{n-2}}^{(1)}} \& \overset{(n-1,2)}{\circ} \arrow{d}{t_{n-1}^{(3)}} \arrow{ur}[description, near end]{{\alpha_{n-1}}^{(1)}} \arrow{ul}[description, near start]{{\beta_{n-2}}^{(1)}} \& \overset{(n,2)}{\circ} \arrow{d}{t_{n}^{(3)}} \arrow{ul}[description, near start]{{\beta_{n-1}}^{(1)}} \\
\overset{(1,3)}{\circ} \arrow{ur}[description, near end]{{\alpha_1}^{(2)}} \& \overset{(2,3)}{\circ} \arrow{ur}[description, near end]{{\alpha_2}^{(2)}} \arrow{ul}[description, near start]{{\beta_1}^{(2)}}  \& \cdots \arrow{ul}[description, near start]{{\beta_2}^{(2)}} \arrow{ur}[description, near end]{{\alpha_{n-2}}^{(2)}} \& \overset{(n-1,3)}{\circ} \arrow{ur}[description, near end]{{\alpha_{n-1}}^{(2)}} \arrow{ul}[description, near start]{{\beta_{n-2}}^{(2)}} \& \overset{(n,3)}{\circ} \arrow{ul}[description, near start]{{\beta_{n-1}}^{(2)}}
\end{tikzcd}
};
\end{tikzpicture}
\]
and $I'$ the admissible ideal generated by the relations
\begin{gather*}
\alpha_{i}^{(1)} t_i^{(2)}, \,\, \beta_{i}^{(1)} t_{i+1}^{(2)}, \,\, \alpha_i^{(2)} t_i^{(3)} - t_{i+1}^{(2)}\alpha_i^{(1)}, \,\, \beta_i^{(2)}t_{i+1}^{(3)}-t_i^{(2)} \beta_i^{(1)}, \,\, i=1, \ldots, n-1,
\\
{\alpha_{i+1}}^{(1)}{\alpha_i}^{(2)},\,\, {\beta_i}^{(1)} {\beta_{i+1}}^{(2)},\,\, {\alpha_i}^{(1)} {\beta_{i}}^{(2)} -{\beta_{i+1}}^{(1)}{\alpha_{i+1}}^{(2)}, \,\, i=1 , \ldots, n-2. 
\end{gather*}
\end{prop}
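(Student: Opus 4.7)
My plan is to construct an explicit isomorphism $\bar\varphi\colon KQ'/I' \xrightarrow{\sim} R_A = \End{A}{G}\op$. I start by defining a $K$-algebra homomorphism $\varphi\colon KQ' \to \End{A}{G}\op$ sending each vertex idempotent $e_{(i,j)}$ to the projection onto the summand $M_{i,j} := P_i/\RAD{j}{P_i}$ of $G$, the vertical arrow $t_i^{(j+1)}$ to the natural surjection $M_{i,j+1} \twoheadrightarrow M_{i,j}$, and the slant arrows $\alpha_i^{(j)}\colon (i, j+1) \to (i+1, j)$ and $\beta_i^{(j)}\colon (i+1, j+1) \to (i, j)$ to the $A$-linear maps $M_{i+1, j} \to M_{i, j+1}$ and $M_{i, j} \to M_{i+1, j+1}$ induced respectively by left multiplication by $\alpha_i$ on $P_{i+1}$ and by $\beta_i$ on $P_i$; these descend to the truncations because $\alpha_i \cdot \RAD{j}{P_{i+1}} \subseteq \RAD{j+1}{P_i}$, and similarly for $\beta_i$. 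I then verify $\varphi(I')=0$ and prove the induced map $\bar\varphi$ is an isomorphism.

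Verifying that each generator of $I'$ lies in the kernel of $\varphi$ reduces to a direct calculation in $\Mod{A}$: the relations $\alpha_i^{(1)}t_i^{(2)}$ and $\beta_i^{(1)}t_{i+1}^{(2)}$ vanish because the induced embeddings of simples into $\Rad{M_{i,2}}$ (respectively $\Rad{M_{i+1,2}}$) are annihilated by the subsequent projection onto the top; the ``ladder'' relations $\alpha_i^{(2)}t_i^{(3)} - t_{i+1}^{(2)}\alpha_i^{(1)}$ and $\beta_i^{(2)}t_{i+1}^{(3)} - t_i^{(2)}\beta_i^{(1)}$ express the naturality of left multiplication with respect to the canonical truncations $M_{i,j+1}\twoheadrightarrow M_{i,j}$; and the remaining quadratic relations $\alpha_{i+1}^{(1)}\alpha_i^{(2)}$, $\beta_i^{(1)}\beta_{i+1}^{(2)}$, and $\alpha_i^{(1)}\beta_i^{(2)} - \beta_{i+1}^{(1)}\alpha_{i+1}^{(2)}$ pull back precisely to the defining relations $\alpha_{i+1}\alpha_i = 0$, $\beta_i\beta_{i+1} = 0$, and $\alpha_i\beta_i - \beta_{i+1}\alpha_{i+1} = 0$ of $A$.

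It remains to show $\bar\varphi$ is a bijection. For surjectivity, I verify that the arrows of $Q'$ form the full Gabriel quiver of $R_A$ by computing $\Rad{P_{i,j}}/\RAD{2}{P_{i,j}}$ for each $(i,j)$ using Proposition \ref{prop:smalo} and Corollary \ref{cor:smaloconsequence}. For $j=1$, the uniseriality of $P_{i,1}=\Delta\B{i,1}$ yields only the arrow $(i,1)\to(i,2)$. For $j=2$, Smalo's sequence $0 \to \bigoplus_{k\sim i}P_{k,1} \to P_{i,2} \to \Delta\B{i,2} \to 0$ (where $k$ ranges over the neighbours of $i$ in the tree) produces the tops $L_{k,1}$ of the submodule together with $L_{i,3} = \Rad{\Delta\B{i,2}}$ from the quotient. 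For $j=3$, $\Rad{P_{i,3}} = \Hom{A}{G}{\Rad{P_i}}$ admits the projective cover $\bigoplus_{k\sim i}P_{k,2}$ by Lemma \ref{lem:verynice}, and each top $L_{k,2}$ contributes an arrow. In every case the arrow count matches $Q'$. The main technical obstacle is injectivity, which I will settle by a dimension comparison: compute $\dim R_A = \sum \dim \Hom{A}{M_{k,l}}{M_{i,j}}$ using $\Hom{A}{M_{k,l}}{M_{i,j}} = \{v \in e_k M_{i,j} : \RAD{l}{A}\cdot v = 0\}$ together with the explicit basis of $A$, and compare with $\dim KQ'/I'$ enumerated path-by-path in normal form; the ladder and commutativity relations are engineered so that the two totals agree.
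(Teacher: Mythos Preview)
Your approach is correct and mirrors the paper's: both determine the Gabriel quiver of $R_A$ vertex by vertex, verify that the listed relations hold, and finish by a dimension comparison (the paper computes $\dim_K R_A = 19n-10$ and checks that $\dim_K KQ'/I'$ agrees). The only notable difference is at the vertices $(i,2)$: rather than reading off $\Top\Rad P_{i,2}$ from Smal{\o}'s short exact sequence --- which, as stated, still needs the small check that the projective summands $P_{k,1}$ of the kernel are not contained in $\RAD{2}{P_{i,2}}$ --- the paper argues directly that no arrow from $(i,2)$ can end in any $(j,2)$ or $(j,3)$ other than via $t_i^{(3)}$ (using the composition factors of $\Delta(i,2)$), and then observes that each monic $L_k \hookrightarrow M_{i,2}$ must correspond to an arrow because there is no alternative path from $(i,2)$ to $(k,1)$ in $Q'$.
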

\begin{proof}
The vertical arrows in the quiver above correspond to the structure of the uniserial projectives $P_{i,1}$. In fact, going back to \cite{smalo}, one sees that the arrows
\[
\begin{tikzcd}[ampersand replacement=\&]
\overset{(i,j-1)}{\circ} \arrow{r}{t_i^{(j)}} \& \overset{(i,j)}{\circ}
\end{tikzcd}
\]
correspond to the canonical epics
\[
\begin{tikzcd}[ampersand replacement=\&]
P_i/\RAD{j}{P_i} \arrow[two heads]{r} \& P_i/ \RAD{j-1}{P_i}
\end{tikzcd}
\]
in $\Mod{A}$. Let $Q'$ be the ordinary quiver of $R_A$. Note that there must be exactly one arrow coming out of the vertices $(i,1)$ of $Q'$. Consider now the vertices $(i,3)$ of $Q'$. Because $P_i$ has Loewy length 3, it follows that
\[
\Rad{P_{i,3}}=\Hom{A}{G}{\Rad{P_i}}.
\]
It is not difficult to show directly that $\Rad{P_{i,3}}$ has top $L_{i-1,2} \oplus L_{i+1,2}$, $2 \leq i \leq n-1$. This also follows from Theorem A in \cite[Chapter 4]{RingeldualofR}. Consequently, there are exactly two arrows with source $(i,3)$ in $Q'$ (for $2 \leq i \leq n-1$), and they must be as depicted in the quiver above. Finally, let us analyse the vertices $(i,2)$ of $Q'$. By the structure of the modules $\Delta\B{i,2}$, there cannot exist arrows from $(i,2)$ to a vertex $(j,2)$. For the same reason, there cannot exist arrows from $(i,2)$ to $(j,3)$, apart from the arrow $t_i^{(3)}$ already mentioned. So any other arrow in $Q'$ having source $(i,2)$ (if any) must have sink $(j,1)$. That is, it must correspond to a map
\[
\begin{tikzcd}[ampersand replacement=\&]
L_j \arrow[hook]{r} \& P_i/ \RAD{2}{P_i}
\end{tikzcd}
\]
in $\Mod{A}$. Conversely, any monic as the one above must correspond to an arrow from $(i,2)$ to $(j,1)$ in $Q'$ because, by what we have seen so far, there cannot exist alternative paths from $(i,2)$ to $(j,1)$ in $Q'$. As a consequence, there must be two more arrows with source $(i,2)$ (if $2 \leq i \leq n-1$), namely
\[
\begin{tikzcd}[ampersand replacement=\&]
\overset{(i,2)}{\circ} \arrow{r}{\beta_{i-1}^{(1)}} \& \overset{(i-1,1)}{\circ}
\end{tikzcd}, \,\,\,
\begin{tikzcd}[ampersand replacement=\&]
\overset{(i,2)}{\circ} \arrow{r}{\alpha_{i}^{(1)}} \& \overset{(i+1,1)}{\circ}
\end{tikzcd} 
\]
This proves that $Q'$ coincides with the quiver in the statement of the proposition.

We have that $R_A \cong KQ'/I'$, for a certain admissible ideal $I'$. By the structure of $P_{i,1}$ the paths $\alpha_i^{(1)}t_i^{(2)}$, $\beta_i^{(1)}t_{i+1}^{(2)}$ must be zero modulo $I'$. Besides, $\alpha_i^{(2)} t_i^{(3)} - t_{i+1}^{(2)} \alpha_i^{(1)}$ must also be zero modulo $I'$ as the underlying diagram
\[
\begin{tikzcd}[ampersand replacement=\&]
P_{i+1}/ \RAD{2}{P_{i+1}} \arrow[two heads]{d}\arrow{r}{\neq 0} \& P_i/ \RAD{3}{P_i} \arrow[two heads]{d} \\
L_{i+1} \arrow[hook]{r} \& P_i/ \RAD{2}{P_i}
\end{tikzcd} 
\]
commutes. Similarly, it follows that $\beta_i^{(2)}t_{i+1}^{(3)}-t_i^{(2)} \beta_i^{(1)}$ must be zero modulo $I'$. In a similar fashion one checks that the remaining relations in the statement of the proposition are zero modulo $I'$. Let $\hat{I}$ be the ideal of $KQ'$ generated by the relations indicated in the statement of the proposition. There is an epic from $KQ' / \hat{I}$ to $R_A$. It is not difficult to check that $R_A$ has dimension $19 n -10$ as a $K$-vector space. It is also easy to prove by induction on $n$ that the dimension of $KQ' / \hat{I}$ is given by the same expression.
\end{proof}

We conclude with some remarks about the algebra $R_A=KQ'/ I'$.
\begin{rem}
Note that the arrows ${\beta}_{i-1}^{(1)}$, ${\alpha}_{i}^{(1)}$ in $Q'$ correspond to irreducible maps in $\Mod{A}$. Let $M$ be a module in $\Mod{A}$. It is clear that any irreducible map $f:X \longrightarrow Y$, with $X$, $Y$ in $\Add{M}$, gives rise to a morphism $f_*=\Hom{A}{M}{f}$ between projectives in $\Mod{(\END{A}{M}\op)}$, satisfying $\Ima{f_*} \subseteq \Rad{\Hom{A}{M}{Y}}$, $\Ima{f_*} \not\subseteq \RAD{2}{\Hom{A}{M}{Y}}$.
\end{rem}
\begin{rem}
Let $A$ be as before. By Theorem $10.3$ in \cite{MR0349747}, $\Gl{R_A} \leq 3$. Proposition 2 in \cite{smalo1978} implies that $\Gl{R_A} \neq 2$. Hence $\Gl{R_A}=3$. Moreover, according to Theorem B in \cite[Chapter 4]{RingeldualofR}, the Ringel dual of $R_A$ is isomorphic to $\B{R_A} \op$ for every Brauer tree algebra $A$.
\end{rem}

\bibliographystyle{amsplain}
\bibliography{QHAlg}

\end{document}